\documentclass{article}

\usepackage{myarxiv}
\usepackage[utf8]{inputenc} 
\usepackage[T1]{fontenc}    
\usepackage{hyperref}       
\usepackage{url}            
\usepackage{booktabs}       
\usepackage{amsfonts}       
\usepackage{nicefrac}       
\usepackage{microtype}      
\usepackage{lipsum}

\usepackage{amssymb,amsmath,amsfonts,amsthm,enumerate}
\usepackage{subfigure}
\usepackage{graphicx}
\usepackage{epsfig}
\usepackage{float}
\usepackage{cite}
\usepackage{bm}
\usepackage{enumitem}
\usepackage{empheq}
\usepackage{dsfont}
\setlist{nosep}

\hypersetup{
  colorlinks   = true, 
  urlcolor     = black, 
  linkcolor    = black, 
  citecolor    = black 
}

\newtheorem{thm}{\indent Theorem}[section]
\newtheorem{lem}[thm]{\indent Lemma}

\newtheorem{coro}[thm]{\indent Corollary}

\newtheorem{prob}[thm]{\indent Problem}
\newtheorem{rem}[thm]{\indent Remark}

\newtheorem{ex}[thm]{\indent Example}

\numberwithin{equation}{section}
\allowdisplaybreaks[4]
\baselineskip 15pt

\def\BC{\mathbb C}
\def\BN{\mathbb N}
\def\BR{\mathbb R}

\def\cD{\mathcal D}

\def\rd{\mathrm d}

\def\rdiv{\mathrm{div}}
\def\e{\mathrm e}

\def\rspan{\mathrm{span}}

\def\Ga{\Gamma}

\def\La{\Lambda}
\def\Si{\Sigma}
\def\Om{\Omega}
\def\al{\alpha}
\def\be{\beta}
\def\ga{\gamma}
\def\de{\delta}

\def\ve{\varepsilon}
\def\te{\theta}

\def\ka{\kappa}
\def\la{\lambda}
\def\si{\sigma}
\def\vp{\varphi}
\def\om{\omega}
\def\f{\frac}
\def\nb{\nabla}
\def\ov{\overline}
\def\pa{\partial}
\def\wh{\widehat}
\def\wt{\widetilde}

\def\sumn{\sum_{n=1}^{\infty}}


\title{Uniqueness of orders and parameters in multi-term\\
time-fractional  diffusion equations\\
by short-time behavior}

\author{
  Yikan \uppercase{Liu}\\
  Research Center of Mathematics for Social Creativity, Research Institute for Electronic Science,\\
  Hokkaido University, N12W7, Kita-Ward, Sapporo 060-0812, Japan.\\
  \texttt{ykliu@es.hokudai.ac.jp}\\
  \And
  Masahiro \uppercase{Yamamoto}\\
  Graduate School of Mathematical Sciences, The University of Tokyo\\
  3-8-1 Komaba, Meguro-ku, Tokyo 153-8914, Japan;\\
  Honorary Member of Academy of Romanian Scientists\\
  Ilfov, nr. 3, Bucuresti, Romania;\\
  Correspondence member of Accademia Peloritana dei Pericolanti, Palazzo Universit\`a\\
  Piazza S. Pugliatti 1 98122 Messina, Italy.\\
  \texttt{myama@next.odn.ne.jp}\\
}

\begin{document}
\maketitle

\begin{abstract}
As the most significant difference from parabolic equations, 
long-time or short-time behavior of solutions to time-fractional evolution 
equations is dominated by the fractional orders, whose unique determination 
has been frequently investigated in literature. Unlike all the existing 
results, in this article we prove the uniqueness of orders and parameters 
(up to a multiplier for the latter) only by principal terms of asymptotic 
expansions of solutions near $t=0$ at a single spatial point. 
Moreover, we discover special conditions on unknown initial values or source terms for the coincidence of observation data. As a byproduct, we can even conclude the uniqueness for initial values or source terms by the same data. The proof relies on the asymptotic expansion after taking the Laplace transform and the completeness of generalized eigenfunctions.
\end{abstract}

\keywords{Multi-term time-fractional diffusion equation\and Parameter inverse 
problem\and Uniqueness\and Short-time asymptotic behavior}
\MRsubject{35R30\and 35R11\and 58J99}

\section{Introduction}\label{sec-intro}

Let $T\in\BR_+:=(0,\infty)$ and $\Om\subset\BR^d$ ($d\in\BN=\{1,2,\ldots\}$) be a bounded domain with smooth boundary $\pa\Om$. For a constant $m\in\BN$, let $\al_j,q_j$ ($j=1,\ldots,m$) be positive constants such that $1>\al_1>\al_2>\cdots>\al_m>0$. We define an elliptic operator $L:\cD(L):=H^2(\Om)\cap H_0^1(\Om)\longrightarrow L^2(\Om)$ by
\[
L h(\bm x):=-\rdiv(\bm a(\bm x)\nb h(\bm x))+\bm b(\bm x)\cdot\nb h(\bm x)+c(\bm x)h(\bm x),\quad\bm x\in\Om,
\]
where $\cdot$ and $\nb=(\f\pa{\pa x_1},\ldots,\f\pa{\pa x_d})$ refer to the inner product in $\BR^d$ and the gradient in $\bm x$, respectively. We emphasize that $L$ is not necessarily symmetric. Here
\begin{equation}\label{eq-cond-L1}
\bm a=(a_{i j})_{1\le i,j\le d}\in C^1(\ov\Om\,;\BR^{d\times d}),\quad\bm b=(b_1,\ldots,b_d)\in C^1(\ov\Om\,;\BR^d),\quad c\in C(\ov\Om)
\end{equation}
are real-valued $\bm x$-dependent matrix-, vector- and scalar-valued functions, respectively. In particular, we assume that $\bm a$ is symmetric and strictly positive-definite on $\ov\Om$\,, i.e., there exists a constant $\de>0$ such that
\begin{equation}\label{eq-cond-L2}
a_{i j}(\bm x)=a_{j i}(\bm x),\quad\bm a(\bm x)\bm\xi\cdot\bm\xi\ge\de|\bm\xi|^2,\quad\forall\,i,j=1,\ldots,d,\ \forall\,\bm x\in\ov\Om\,,\ \forall\,\bm\xi\in\BR^d,
\end{equation}
where $|\bm\xi|^2=\bm\xi\cdot\bm\xi$.

In this article, we are concerned with the following initial-boundary value problem for a multi-term time-fractional diffusion equation
\begin{equation}\label{eq-ibvp-u}
\left\{\begin{alignedat}{2}
& \!\!\left(\sum_{j=1}^m q_j\pa_t^{\al_j}+L\right)u(\bm x,t)=\rho(t)f(\bm x), & \quad & (\bm x,t)\in\Om\times(0,T),\\
& u(\bm x,0)=a(\bm x), & \quad & \bm x\in\Om,\\
& u(\bm x,t)=0, & \quad & (\bm x,t)\in\pa\Om\times(0,T).
\end{alignedat}\right.
\end{equation}
Here $\pa_t^{\al_j}$ denotes the $\al_j$-th order Caputo derivative in time, which is traditionally defined by (e.g. Podlubny \cite{P99})
\[
\pa_t^{\al_j}h(t):=\f1{\Ga(1-\al_j)}\int_0^t\f{h'(s)}{(t-s)^{\al_j}}\,\rd s,\quad h\in C^1[0,\infty).
\]
Here $\Ga(\,\cdot\,)$ stands for the Gamma function. Throughout this article, we basically assume $\rho\in L^1(0,T)$ and for convenience, we understand $\rho\in L^1(\BR_+)$ by identifying it with its zero extension in $(T,\infty)$.

The governing equation in \eqref{eq-ibvp-u} is called a single-term 
time-fractional diffusion equation if $m=1$, while it is called a multi-term 
one if $m\ge2$. In the last decades, they have gathered increasing population 
as typical nonlocal models for anomalous diffusion in heterogeneous media or 
fractals (e.g. \cite{BP88,HH98}). Especially, linear theories for 
the case of $m=1$ have been established in recent years.
Among many references, we refer only to \cite{SY11,GLY15,KRY20} and 
see the references therein. Compared with classical parabolic equations, 
single-term time-fractional diffusion equations inherit the same 
time-analyticity and the maximum principle and see e.g. \cite{SY11,LY19}. 
Meanwhile, they also share quantitive similarities in the smoothing effect and the unique continuation property (e.g. \cite{SY11,JLLY17}). In contrast, they 
show essential difference from their integer counterpart in the  
asymptotic behavior as $t \to \infty$, which indicates the rate 
$t^{-\al}$ (e.g. \cite{SY11}).

Though less studied, it reveals that the case of $m\ge2$ in \eqref{eq-ibvp-u} 
also behaves similarly to that of $m=1$. We refer to \cite{LLY15} for the 
well-posedness and the limited smoothing effect of solutions, and 
\cite{L11,L17} for weak and strong positivity properties, respectively. 
In \cite{BY13,LHY20a}, the parameters $q_j$ in \eqref{eq-ibvp-u} are 
generalized to be $\bm x$-dependent at the cost of slightly weaker solution 
regularity. Remarkably, the short- and long-time asymptotic behavior of 
solutions depend on the highest order $\al_1$ and the lowest one $\al_m$, respectively (e.g. \cite{LLY15,LHY20a}).

Since the orders of fractional derivatives are closely related to 
the heterogeneity and related physical properties of media, the determination of $\al_j$ in \eqref{eq-ibvp-u} turns out to be significant in both theory and practice. For the single-term case in one-dimensional case, 
Cheng et al. \cite{CNYY09} first proved the uniqueness of $\al_1$ by the data $u|_{\{\bm x_0\}\times(0,T)}$ with $\bm x_0\in\pa\Om$. Employing the above mentioned asymptotic behavior, Hatano et al. \cite{HNWY13} deduced inversion formulae
\[
\al_1=-\lim_{t\to\infty}\f{t\,\pa_t u(\bm x_0,t)}{u(\bm x_0,t)}=\lim_{t\to0}\f{t\,\pa_t u(\bm x_0,t)}{u(\bm x_0,t)-a(\bm x_0)}
\]
with $\bm x_0\in\Om$. For the multi-term case, Li and Yamamoto \cite{LY15} showed the unique determination of $m,\al_j,q_j$ with constants $q_j>0$ by $u|_{\{\bm x_0\}\times(0,T)}$ with $\bm x_0\in\Om$. For $\bm x$-dependent $q_j\ge0$, Li et al. \cite{LIY16} proved the unique determination of $m,\al_j,q_j$ and the zeroth-order coefficient $c$ using the Dirichlet-to-Neumann map, and Li et al. \cite{LHY20a} uniquely identified $\al_j$ by $u|_{\{\bm x_0\}\times(0,T)}$ with $\bm x_0\in\Om$. We refrain from providing a complete list, and refer to Li et al. \cite{LLY19} as a survey covering most literature on parameter inverse problems for \eqref{eq-ibvp-u} before 2019. We notice that due to the dimension of observation data, most literature dealt with the simultaneous identification of orders and unknown sources or coefficients in \eqref{eq-ibvp-u} 
(see \cite{LW19,LZ20,SLZ21,KLLY21,YZW21,Y21b}). 
Only \cite{LHY20b} obtained the stability in determining a single order by the data $u(\bm x_0,t_0)$. As recent progress, we refer to \cite{AU20,Y20} for the unique determination of time and space fractional orders, and Yamamoto \cite{Y21a} for treating a non-symmetric $L$. Very recently, Jin and Kian \cite{JK21a,JK21b} uniquely recovered $m,\al_j,q_j$ in \eqref{eq-ibvp-u} 
in an unknown medium, that is, $L$ is unknown.

Among the above literature on parameter inverse problems for \eqref{eq-ibvp-u}, the majority used the observation of $u$ at $\{\bm x_0\}\times(0,T)$. However, due to technical difficulties, almost all publications assume 
the symmetry of $L$, that is, $\bm b\equiv\bm0$, and only uniqueness was proved with data of solution such as $u(\bm x_0,t)$ at a monitoring point $\bm x_0\in\Om$
over a time interval. 
On the other hand, in the existing publications, 
since the observation data are a function of $t$ 
and there are only finite unknowns, such an inverse problem is overdetermined, which indicates the possibility to reduce the accuracy of data or determine another unknown ingredient in \eqref{eq-ibvp-u} simultaneously. 
Moreover, since the orders $\alpha_j$ greatly influence short-time behavior
of the solution, we can expect that principal terms of asymptotic expansions
of solutions near $t=0$ can characterize such orders, and we are motivated to 
establish the uniqueness for the following inverse problem.

\begin{prob}[Parameter inverse problem]\label{prob-pip}
Let $\bm x_0\in\Om$ and sufficiently small $\tau>0$ be fixed 
arbitrarily$,$ and let $u$ satisfy \eqref{eq-ibvp-u} with $f\equiv0$ or 
$a\equiv0$ in $\Om$. Determine $m$ and $\al_j,q_j$ for $j=1,\ldots,m$ 
simultaneously by asymptotic behavior of $u(\bm x_0,t)$ for $0<t<\tau$.
\end{prob}

In the formulation of Problem \ref{prob-pip}, as data we adopt 
the short-time behavior near $t=0$, in other words, principal terms 
of the asymptotic expansion in $t$ of the solution as $t\downarrow0$, which is
formulated by \eqref{eq-inexact} later.
We emphasize that we do not require any non-aymptotically observed quantities 
of the solution for the uniqueness of Problem \ref{prob-pip}, which is 
the main novelty of our article.

Moreover, the initial value $a$ and the spatial component $f$ of the source 
term in \eqref{eq-ibvp-u} are also assumed to be unknown. Finally, we mention 
that the elliptic part $L$ can be non-symmetric, which seems also seldom 
handled in references.

Only by the information of principal terms of asymptotic expansions
of solutions near $t=0$, we establish the uniqueness in determining $m$ and 
$\al_j$ in \eqref{eq-ibvp-u} simultaneously in the respective cases
$f \equiv 0$ and $a\equiv 0$.
Meanwhile, it reveals that $q_j$ can be uniquely identified up to a common 
multiplier depending on the local information of the unknown initial value 
$a$ or source term $f$, and the possible non-uniqueness is illustrated 
by some examples. 
Our uniqueness result can be regarded as the strong distinguishability 
effect of the fractional orders to the solution of \eqref{eq-ibvp-u}.

On this direction, it can be proved that 
unknown initial values or source terms should satisfy rather special 
relations in order to produce coinciding asymptotics. 
As a corollary, in a special case we can even conclude the uniqueness of the initial value $a$ or the spatial component $f$ of the source term
by data $u(\bm x_0,\,\cdot\,)$.
It is surprising because the one-dimensional data can uniquely determine a 
$d$-dimensional unknown. This is indeed an underdetermined problem and 
the data are sometimes called sparse data in literature (e.g. \cite{LZ20}).

The remainder of this paper is organized as follows. In Section \ref{sec-prelim}, we make preparations to state and prove the main results, especially the generalized eigenvalue theory of $L$. Next, we collect the main results concerning Problem \ref{prob-pip} along with illustrative examples and remarks in Section \ref{sec-main}. Then Section \ref{sec-proof} is devoted to the proofs of main results, followed by conclusions provided in Section \ref{sec-rem}.

\section{Preliminaries}\label{sec-prelim}

Throughout this article, by $C>0$ we denote generic constants which may change line by line. For $h\in L^1(\BR_+)$, we denote its Laplace transform by
\[
\wh h(p):=\int_{\BR_+}\e^{-p t}h(t)\,\rd t,\quad p>0.
\]
For $h\in L^1(\BR_+)$ and a constant $\mu>-1$, by the notation $h(t)\sim t^\mu$ for small $t>0$, we mean that there exists a constant $C'\ne0$ such that $\lim_{t\to0+}t^{-\mu}h(t)=C'$. In this case, we know $h(t)=O(t^\mu)$ but $h(t)\ne o(t^{\mu'})$ for any $\mu'>\mu$ a.e. as $t\to0+$. We define $\sim$ for a.e. 
large $t>0$ in the same manner.

In the sequel, the inner product of $L^2(\Om)$ is denoted by $(\,\cdot\,,\,\cdot\,)$. By $\si(L)=\{\la_n\}_{n\in\BN}\subset\BC$ we denote the set of all distinct eigenvalues of $L$. Throughout this article, for our arguments we assume
\begin{equation}\label{eq-cond-L3}
0\notin\si(L).
\end{equation}
For the sake of self-containedness, we collect necessary ingredients concerning the eigenvalue theory of $L$, which can be found e.g. in Kato \cite{K76}. For each $n\in\BN$, we take a circle $\ga_n$ centered at $\la_n$ with sufficiently small radius such that $\ga_n$ does not enclose $\la_{n'}$ with any $n'\ne n$. Defining an operator
\[
P_n:=-\f1{2\pi\sqrt{-1}}\int_{\ga_n}(L-z)^{-1}\,\rd z,
\]
we know that
\[
P_n P_{n'}=\begin{cases}
P_n, & n'=n,\\
0, & n'\ne n.
\end{cases}
\]
We call $P_n$ the eigenprojection for $\la_n$ and any $\vp\in P_n L^2(\Om)\setminus\{0\}$ a generalized eigenfunction of $\la_n$. Meanwhile, the space $P_n L^2(\Om)$ is finite dimensional and we set $d_n:=\dim P_n L^2(\Om)<\infty$. We note that if $L$ is symmetric, then there holds for all $n\in\BN$ that
\begin{equation}\label{eq-symm}
\la_n\in\BR,\quad P_n L^2(\Om)=\ker(L-\la_n),\quad d_n=\dim\ker(L-\la_n).
\end{equation}

Next, we introduce the eigennilpotent of $\la_n$ as
\[
D_n:=(L-\la_n)P_n.
\]
Then we can verify that $P_n\vp=\vp$ for $\vp\in P_n L^2(\Om)$ and
\[
P_n L^2(\Om)\subset\cD(L),\quad L P_n L^2(\Om)\subset P_n L^2(\Om),\quad D_n P_n L^2(\Om)\subset P_n L^2(\Om),\quad D_n^{d_n}=(L-\la_n)^{d_n}P_n=0.
\]
Using $P_n$ and $D_n$, we have the following Laurant expansion for the resolvent:
\begin{equation}\label{eq-Laurant}
(L-z)^{-1}P_n=\f{P_n}{\la_n-z}+\sum_{k=1}^{d_n-1}\f{(-1)^k D_n^k}{(\la_n-z)^{k+1}},\quad z\not\in\si(L).
\end{equation}
On the other hand, we recall the following resolvent estimate (see e.g. Tanabe \cite{T75}): there exist constants $z_0>0$ and $C>0$ such that
\begin{equation}\label{eq-tanabe}
\|(L+z)^{-1}h\|_{L^2(\Om)}\le\f C{|z|}\|h\|_{L^2(\Om)},\quad\forall\,z>z_0,\ \forall\,h\in L^2(\Om).
\end{equation}
Finally, we introduce the fractional power $L^\ga$ of $L$ along with its domain $\cD(L^\ga)$ for $\ga>0$. First for $0<\ga<1$, the fractional operator $L^{-\ga}$ is defined by
\[
L^{-\ga}h:=\f{\sin\pi\ga}\pi\int_{\BR_+}z^{-\ga}(L+z)^{-1}h\,\rd z,\quad h\in L^2(\Om).
\]
Then there holds $L^{-\ga}P_n L^2(\Om)\subset P_n L^2(\Om)$ and hence $L^\ga P_n L^2(\Om)\subset P_n L^2(\Om)$ for all $\ga\ge0$. Then for $0<\ga<1$, we define $L^\ga$ as the inverse of $L^{-\ga}$. For $\ga\in\BR_+\setminus\BN$, we define $L^\ga=L^{\ga-\lfloor\ga\rfloor}\circ L^{\lfloor\ga\rfloor}$, where $\lfloor\ga\rfloor:=\max\{n\in\mathbb Z\mid n\le\ga\}$ stands for the flooring function and $\circ$ denotes the composite. Then we set $\cD(L^\ga):=\{h\in L^2(\Om)\mid L^\ga h\in L^2(\Om)\}$ for $\ga>0$.

Now we recall the well-posedness results concerning the forward problems \eqref{eq-ibvp-u}.

\begin{lem}\label{lem-fp}
Assume {\rm\eqref{eq-cond-L1}--\eqref{eq-cond-L2}} and \eqref{eq-cond-L3}. Let $u$ satisfy \eqref{eq-ibvp-u} with $f\equiv0$ in $\Om$ and $a\in\cD(L^\ga)$ with $\ga\ge0$. Then for any fixed constant $\eta\in[0,1),$ the problem \eqref{eq-ibvp-u} admits a unique solution $u\in C([0,\infty);\cD(L^\ga))\cap C(\BR_+;\cD(L^{\ga+\eta}))$. Moreover, there exists a constant $C>0$ depending on $\al_j,q_j\ (j=1,\ldots,m),\Om,L,\ga$ such that
\begin{equation}\label{eq-est-u}
\|u(\,\cdot\,,t)\|_{\cD(L^{\ga+\eta})}\le C\,t^{-\al_1\eta}\e^{C t}\|a\|_{\cD(L^\ga)},\quad\forall\,t>0.
\end{equation}
\end{lem}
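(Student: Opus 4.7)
My plan is to construct $u$ via Laplace transform and invert on a Hankel-type contour. Formally applying the Laplace transform to \eqref{eq-ibvp-u} with $f\equiv0$ and using the Caputo identity $\widehat{\pa_t^{\al_j}u}(p)=p^{\al_j}\wh u(p)-p^{\al_j-1}a$ yields the algebraic identity
\[
\wh u(p)=(L+\Psi(p))^{-1}\,\f{\Psi(p)}{p}\,a,\qquad\Psi(p):=\sum_{j=1}^m q_j p^{\al_j}.
\]
The map $p\mapsto\Psi(p)$ sends the sector $\Si_\te:=\{p\in\BC\setminus\{0\}:|\arg p|<\te\}$ with $\te$ slightly larger than $\pi/2$ into a sector of opening $\al_1\te<\pi$; for $|p|$ large the value $\Psi(p)$ satisfies $\Psi(p)>z_0$ in the sense of \eqref{eq-tanabe}, so $(L+\Psi(p))^{-1}$ is a bounded operator, while only finitely many points in $\Si_\te$ can fulfill $-\Psi(p)\in\si(L)$. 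Then I would define
\[
u(\,\cdot\,,t):=\f1{2\pi\sqrt{-1}}\int_\Ga\e^{tp}(L+\Psi(p))^{-1}\,\f{\Psi(p)}{p}\,a\,\rd p
\]
on a suitable Hankel contour $\Ga\subset\Si_\te$ enclosing those exceptional points, and verify by differentiation under the integral sign (justified by \eqref{eq-tanabe}) that $u$ solves the equation in \eqref{eq-ibvp-u}, assumes $a$ initially, and vanishes on $\pa\Om$.

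For the quantitative estimate, since $L^{\ga+\eta}$ commutes with the contour integral and $a\in\cD(L^\ga)$, the task reduces to bounding
\[
\int_\Ga\e^{t\rRe p}\,\|L^\eta(L+\Psi(p))^{-1}\|_{L^2\to L^2}\,|\Psi(p)|\,|p|^{-1}\,|\rd p|\cdot\|L^\ga a\|_{L^2(\Om)}.
\]
The standard moment inequality for fractional powers of sectorial operators, applied via \eqref{eq-tanabe}, gives $\|L^\eta(L+\Psi(p))^{-1}\|_{L^2\to L^2}\le C|\Psi(p)|^{\eta-1}$ uniformly on $\Si_\te$ after excluding a compact neighborhood of the exceptional points. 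Scaling $p=s/t$ on the two unbounded arms of $\Ga$ then produces $|\Psi(s/t)|^\eta\sim t^{-\al_1\eta}|s|^{\al_1\eta}$ for $|s|\gtrsim1$, and the exponential factor $\e^{t\rRe p}$ ensures integrability; this delivers the singular factor $t^{-\al_1\eta}$, while the bounded portion of $\Ga$, which encircles the finitely many exceptional $p$'s, yields the factor $\e^{C t}$.

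The main obstacle is the non-symmetry of $L$: there is no orthonormal eigenbasis, so the spectral resolution must be handled through the Riesz projections $P_n$ and the Laurent expansion \eqref{eq-Laurant}. Using \eqref{eq-Laurant}, each local contribution of the resolvent decomposes into the principal part $P_n/(\la_n+\Psi(p))$ plus finitely many nilpotent corrections $(-1)^kD_n^k/(\la_n+\Psi(p))^{k+1}$; the latter are more singular in $p$ but act on the finite-dimensional range of $P_n$, so their contribution is controlled by polynomial factors in $t$ that are absorbed into $\e^{C t}$. Finally, continuity at $t=0$ in the $\cD(L^\ga)$ topology (the case $\eta=0$) follows from the contour representation by dominated convergence, since $\Psi(p)/p\cdot(L+\Psi(p))^{-1}L^\ga a\to L^\ga a$ as $|p|\to\infty$ along $\Ga$, in analogy with the single-term argument in \cite{SY11} and its multi-term extension in \cite{LLY15}.
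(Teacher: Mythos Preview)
The paper does not actually prove Lemma~\ref{lem-fp}: it is stated without proof and attributed to \cite[Theorem~2.3]{LHY20a}, with the sole additional remark that ``a close scrutiny into the proof reveals that $T$ can be chosen arbitrarily,'' which is how the authors upgrade the finite-interval estimate of \cite{LHY20a} to the global bound \eqref{eq-est-u} with the factor $\e^{Ct}$. There is therefore no in-paper argument to compare against.

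Your contour-integral construction is a legitimate and self-contained route, and the overall architecture (Laplace representation, Hankel contour, moment inequality to extract $t^{-\al_1\eta}$, compact portion of the contour yielding $\e^{Ct}$) is sound. Three points deserve tightening. First, you invoke \eqref{eq-tanabe} as though it were a sectorial resolvent estimate, but in the paper it is stated only for real $z>z_0$; your argument requires the full bound $\|(L+z)^{-1}\|_{L^2\to L^2}\le C/|z|$ for $z$ in a sector of half-angle larger than $\pi/2$, which does hold for uniformly elliptic operators with the regularity \eqref{eq-cond-L1}--\eqref{eq-cond-L2} (this is in Tanabe \cite{T75} as well) but should be stated explicitly. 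Second, the paragraph on Riesz projections and the Laurent expansion \eqref{eq-Laurant} is superfluous for your scheme: once the sectorial resolvent bound and the moment inequality $\|L^\eta(L+z)^{-1}\|\le C|z|^{\eta-1}$ are in hand, the contour integral is estimated directly in operator norm, and the non-symmetry of $L$ requires no separate treatment. Third, the assertion that only finitely many $p\in\Si_\te$ satisfy $-\Psi(p)\in\si(L)$ needs the observation that, because $L$ is sectorial, its (possibly complex) eigenvalues lie, apart from finitely many, in a sector disjoint from $-\Psi(\Si_\te)$ once $\te$ is chosen with $\al_1\te$ smaller than the complementary angle; this is straightforward but should be said. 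With these clarifications your sketch gives a complete proof, arguably more transparent than tracing constants through \cite{LHY20a}.
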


The above lemma asserts that the solution norm $\|u(\,\cdot\,,t)\|_{\cD(L^{\ga+\eta})}$ increases at most exponentially as $t\to\infty$, which guarantees the existence of its Laplace transform $\wh u(\,\cdot\,,p)$ for large $p>0$. 
Lemma \ref{lem-fp} slightly improves Li, Huang and Yamamoto \cite[Theorem 2.3]{LHY20a} in the sense of more general choice of $a$ and the key estimate \eqref{eq-est-u}. Especially, in \cite[Theorem 2.3]{LHY20a} the estimate \eqref{eq-est-u} only held for $t\in(0,T)$ with fixed $T>0$. However, a close scrutiny into the proof reveals that $T$ can be chosen arbitrarily. On the other hand, it was proved in \cite{LLY15,LHY20a} that if $\bm b\equiv\bm0$ and $c\ge0$ on $\ov\Om$\,, then the solution $u(\,\cdot\,,t)$ obeys the decay rate $t^{-\al_m}$ as $t\to\infty$. Nevertheless, instead of the sharp decay estimate, here \eqref{eq-est-u} is enough for applying the Laplace transform, which plays an essential role in the proof (see Section \ref{sec-proof}).

\section{Main Results and Examples}\label{sec-main}

Now we are well prepared to state the main results of this article. To this end, we introduce another initial-boundary value problem
\begin{equation}\label{eq-ibvp-v}
\left\{\begin{alignedat}{2}
& \!\!\left(\sum_{j=1}^{m'}r_j\pa_t^{\be_j}+L\right)v(\bm x,t)=\rho(t)g(\bm x), & \quad & (\bm x,t)\in\Om\times(0,T),\\
& v(\bm x,0)=b(\bm x), & \quad & \bm x\in\Om,\\
& v(\bm x,t)=0, & \quad & (\bm x,t)\in\pa\Om\times(0,T),
\end{alignedat}\right.
\end{equation}
where $m'\in\BN$ is a constant and $\be_j,r_j$ ($j=1,\ldots,m'$) are positive constants satisfying $1>\be_1>\be_2>\cdots>\be_{m'}>0$. We notice that \eqref{eq-ibvp-u} and \eqref{eq-ibvp-v} share the same elliptic part $L$ and the temporal component $\rho$ of the source term. Nevertheless, their initial values $a,b$ and spatial components $f,g$ of source terms differ from each other, which are basically assumed also to be unknown.

Let us state our first main result on the unique determination of orders and 
parameters with short-time 
asymptotic information for homogeneous problems, i.e., 
$f=g\equiv0$ in \eqref{eq-ibvp-u} and \eqref{eq-ibvp-v}.

\begin{thm}\label{thm-inexact}
Assume {\rm\eqref{eq-cond-L1}--\eqref{eq-cond-L2}} and \eqref{eq-cond-L3}. 
Let $u,v$ satisfy \eqref{eq-ibvp-u} and \eqref{eq-ibvp-v} respectively with $f=g\equiv0$ in $\Om,$ where $a,b\in\cD(L^\ga)$ with $\ga>1+d/4$. 
Choose $\bm x_0\in\Om$ arbitrarily such that 
$L a(\bm x_0)\ne0,L b(\bm x_0)\ne0$ and denote $\ka:=L a(\bm x_0)/L b(\bm x_0)$.
\begin{enumerate}[label=(\alph*),leftmargin=*]
\item If there exist constants $C>0$ and $\nu>\min\{\al_1,\be_1\}$ such that
\begin{equation}\label{eq-inexact}
|u(\bm x_0,t)-v(\bm x_0,t)|\le C\,t^\nu,\quad0\le t\le\tau,
\end{equation}
where $\tau>0$ is some constant$,$ then
\begin{equation}\label{eq-unique1}
\al_1=\be_1,\quad \f{q_1}{r_1}=\ka.
\end{equation}
\item If further $a,b\in\cD(L^{1+\ga})$ and \eqref{eq-inexact} is satisfied with $\nu>2\min\{\al_1,\be_1\},$ then \eqref{eq-inexact} implies
\begin{equation}\label{eq-unique2}
m=m',\quad\al_j=\be_j,\quad\f{q_j}{r_j}=\ka,\quad j=1,\ldots,m.
\end{equation}
\end{enumerate}
\end{thm}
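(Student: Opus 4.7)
The plan is to pass to the Laplace transform, convert the short-time hypothesis \eqref{eq-inexact} into a large-$p$ decay estimate on $\wh u(\bm x_0, p) - \wh v(\bm x_0, p)$, and then match the two asymptotic expansions in decreasing powers of $p$ term by term. Taking Laplace transforms in \eqref{eq-ibvp-u} and \eqref{eq-ibvp-v} (with $f = g \equiv 0$) via the Caputo-derivative formula, and setting $Q_u(p) := \sum_{j=1}^m q_j p^{\al_j}$, $Q_v(p) := \sum_{j=1}^{m'} r_j p^{\be_j}$, one obtains
\[
\wh u(\,\cdot\,,p) = \f{Q_u(p)}{p}(Q_u(p)+L)^{-1} a = \f{a}{p} - \f{1}{p}(Q_u(p)+L)^{-1} La,
\]
and analogously for $\wh v$. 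On the time side, splitting $\int_0^\infty \e^{-pt}(u-v)(\bm x_0, t)\,\rd t$ at $\tau$ and bounding the tail $\int_\tau^\infty$ via the at-most exponential growth in \eqref{eq-est-u}, the hypothesis \eqref{eq-inexact} yields the Abelian-type estimate $|\wh u(\bm x_0, p) - \wh v(\bm x_0, p)| \le C p^{-1-\nu}$ as $p \to \infty$. Pointwise evaluation at $\bm x_0$ is legitimate because $\ga > 1 + d/4$ gives $\cD(L^{\ga - 1}) \hookrightarrow C(\ov\Om)$ by Sobolev embedding (so $La(\bm x_0)$ makes sense), and using the Laurent expansion \eqref{eq-Laurant} together with the resolvent estimate \eqref{eq-tanabe} applied in the $\cD(L^{\ga - 1})$-norm, the principal asymptotic
\[
\wh u(\bm x_0, p) = \f{a(\bm x_0)}{p} - \f{La(\bm x_0)}{q_1}\,p^{-1-\al_1} + o(p^{-1-\al_1})
\]
holds, with $q_1 p^{\al_1}$ the leading term of $Q_u(p)$ as $p \to \infty$.

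For part (a), subtracting the corresponding expansion for $\wh v$ gives
\[
\wh u(\bm x_0, p) - \wh v(\bm x_0, p) = \f{a(\bm x_0) - b(\bm x_0)}{p} - \f{La(\bm x_0)}{q_1}\,p^{-1-\al_1} + \f{Lb(\bm x_0)}{r_1}\,p^{-1-\be_1} + o(p^{-1-\min\{\al_1, \be_1\}}).
\]
Since $\nu > \min\{\al_1, \be_1\}$, every coefficient at an exponent strictly greater than $-1-\nu$ must vanish for the Abelian bound to hold. This successively forces $a(\bm x_0) = b(\bm x_0)$; then, using $La(\bm x_0), Lb(\bm x_0) \ne 0$, one must have $\al_1 = \be_1$ (otherwise a nonzero coefficient would survive at the slower-decaying power $p^{-1-\min\{\al_1, \be_1\}}$); and finally $La(\bm x_0)/q_1 = Lb(\bm x_0)/r_1$, i.e., $q_1/r_1 = \ka$.

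For part (b), the stronger assumption $a, b \in \cD(L^{1+\ga})$ makes $L^2 a, L^2 b$ continuous at $\bm x_0$, permitting one further level of expansion via the Neumann-type identity $(Q_u(p)+L)^{-1} La = Q_u(p)^{-1} La - Q_u(p)^{-2} L^2 a + O(Q_u(p)^{-3})$. Writing $Q_u(p)^{-1} = (q_1 p^{\al_1})^{-1}\bigl(1 - q_1^{-1}\sum_{j \ge 2} q_j p^{\al_j - \al_1}\bigr) + O(p^{2\al_2 - 3\al_1})$ and combining with the next term $L^2 a(\bm x_0)/(p\,Q_u(p)^2) \sim q_1^{-2} L^2 a(\bm x_0)\,p^{-1-2\al_1}$ (and analogously for $Q_v, L^2 b$), and using the part (a) identities $\al_1 = \be_1$ and $A := La(\bm x_0)/q_1 = Lb(\bm x_0)/r_1 \ne 0$, one finds
\begin{align*}
\wh u(\bm x_0, p) - \wh v(\bm x_0, p) &= A\!\sum_{j=2}^m \f{q_j}{q_1}\,p^{-1-(2\al_1 - \al_j)} - A\!\sum_{j=2}^{m'} \f{r_j}{r_1}\,p^{-1-(2\al_1 - \be_j)} \\
&\quad + \Bigl(\f{L^2 a(\bm x_0)}{q_1^2} - \f{L^2 b(\bm x_0)}{r_1^2}\Bigr)\,p^{-1-2\al_1} + o(p^{-1-2\al_1}).
\end{align*}
Since $\nu > 2\al_1$, every coefficient at an exponent $\ge -1-2\al_1$ must vanish; because both $\{\al_j\}_{j=2}^m$ and $\{\be_j\}_{j=2}^{m'}$ are strictly decreasing in $(0, \al_1)$ and the positive coefficients $A q_j/q_1$ and the negative $-A r_j/r_1$ can cancel only by pair-wise matching of exponents, the multisets $\{(\al_j, q_j/q_1)\}_{j=2}^m$ and $\{(\be_j, r_j/r_1)\}_{j=2}^{m'}$ must coincide, forcing $m = m'$, $\al_j = \be_j$, $q_j/q_1 = r_j/r_1$, and hence $q_j/r_j = \ka$ for all $j$. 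The main technical obstacle is rendering the pointwise expansion of $\wh u(\bm x_0, p)$ rigorous at the borderline regularity $\ga > 1 + d/4$ (resp.\ $1 + \ga$ in part (b)): this relies on the Laurent expansion \eqref{eq-Laurant} for the non-symmetric $L$ (so that the nilpotent contributions $D_n^k$ are correctly tracked at each order) combined with resolvent bounds \eqref{eq-tanabe} applied in the intermediate $\cD(L^{\ga - 1})$-norm, followed by Sobolev embedding into $C(\ov\Om)$ to pass from the graph-norm control to pointwise evaluation at $\bm x_0$.
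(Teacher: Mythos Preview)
Your overall strategy coincides with the paper's: take Laplace transforms, convert \eqref{eq-inexact} into $|\wh u(\bm x_0,p)-\wh v(\bm x_0,p)|\le Cp^{-1-\nu}$ for large $p$, and match asymptotic expansions. Part (a) is essentially correct; the paper justifies the $o(p^{-1-\al_1})$ remainder at the borderline regularity $\ga>1+d/4$ by approximating $a,b$ with finite sums of generalized eigenfunctions (Lemma \ref{lem-total}) and passing $N\to\infty$ before $p\to\infty$, which is the density argument your last paragraph alludes to.

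Part (b), however, has a genuine gap. Your expansion $Q_u(p)^{-1}=(q_1p^{\al_1})^{-1}\bigl(1-q_1^{-1}\sum_{j\ge2}q_jp^{\al_j-\al_1}\bigr)+O(p^{2\al_2-3\al_1})$ is correct, but $O(p^{2\al_2-3\al_1})$ is \emph{not} $o(p^{-2\al_1})$ unless $\al_2<\al_1/2$, which is nowhere assumed. When $\al_2>\al_1/2$, higher powers of the geometric series produce terms with exponents strictly between $-1-2\al_1$ and $-1-\al_1$; your displayed expansion of $\wh u-\wh v$ with remainder $o(p^{-1-2\al_1})$ is therefore false in general, and the ``multisets must coincide'' matching breaks down because those extra cross terms are mixed in. The paper repairs this by induction on $\ell$ (Steps 3--4): once $\al_j=\be_j$ and $q_j=\ka r_j$ are known for $j<\ell$, every cross term built only from indices $<\ell$ appears identically on both sides and cancels, so the next uncancelled leading contribution isolates $\al_\ell$ against $\be_\ell$. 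An equivalent short fix in your framework: since $a\in\cD(L^{1+\ga})$ gives $L^2a\in\cD(L^{\ga-1})\hookrightarrow C(\ov\Om)$, the resolvent estimate yields $(Q_u(p)+L)^{-1}La(\bm x_0)=Q_u(p)^{-1}La(\bm x_0)+O(p^{-2\al_1})$ and similarly for $v$; combined with the Abelian bound this gives $Q_u(p)^{-1}La(\bm x_0)-Q_v(p)^{-1}Lb(\bm x_0)=O(p^{-2\al_1})$, hence $\ka Q_v(p)-Q_u(p)=O(1)$ as $p\to\infty$. Since every exponent $\al_j,\be_j$ is strictly positive, the only way a finite linear combination of $p^{\al_j}$ and $p^{\be_j}$ stays bounded is total cancellation, which forces $m=m'$, $\al_j=\be_j$, $q_j=\ka r_j$ for all $j$ at once.
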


In \eqref{eq-inexact}, we remark that $\tau>0$ can be arbitrarily small, and 
in view of \eqref{eq-asymp},
this assumption means that some principal parts of the asymptotic expansions of
$u(\bm x_0,t)$ and $v(\bm x_0,t)$ coincide.

\begin{rem}
{\rm\begin{enumerate}[label=(\alph*),leftmargin=*]
\item According to the Sobolev embedding theorem (see Adams \cite{A75}), 
we see that $a,b\in\cD(L^\ga)$ with $\ga>1+d/4$ implies $a,b\in C^2(\ov\Om)$ and thus $L a(\bm x_0),L b(\bm x_0)$ are well-defined. On the other hand, taking $t=0$ in \eqref{eq-inexact}, we automatically obtain
\begin{equation}\label{eq-a=b}
a(\bm x_0)=b(\bm x_0).
\end{equation}
\item Unlike most of the existing results on the uniqueness for inverse problems, 
we only need the short-time asymptotic behavior \eqref{eq-inexact} 
instead of their coincidence. Moreover, we even do not need $a=b$ as was assumed in most literature. Therefore, we minimize the assumptions on initial values to $L a(\bm x_0)\ne0$, $L b(\bm x_0)\ne0$ and \eqref{eq-a=b}.
\item The choices of the order $\nu$ in \eqref{eq-inexact} in Theorem \ref{thm-inexact}(i)--(ii) turn out to be the minimum necessary condition used in the proof. If there is an a priori upper bound $\ov\al$ of $\min\{\al_1,\be_1\}$, then safer requirements of $\nu$ would be $\nu>\ov\al$ in (i) and $\nu>2\ov\al$ in (ii). If there is no a priori information on $\al_1,\be_1$, then we should require $\nu\ge1$ in (i) and $\nu\ge2$ in (ii).
\item Indeed, by the arguments in \cite{LLY15}, one can prove
\[
|u(\bm x_0,t)-a(\bm x_0)|=O(t^{\al_1}),\quad|v(\bm x_0,t)-b(\bm x_0)|=O(t^{\be_1})\quad\mbox{as }t\to0,
\]
which, together with \eqref{eq-a=b}, implies
\begin{equation}\label{eq-asymp}
|u(\bm x_0,t)-v(\bm x_0,t)|\le|u(\bm x_0,t)-a(\bm x_0)|+|v(\bm x_0,t)-b(\bm x_0)|=O(t^{\min\{\al_1,\be_1\}})\quad\mbox{as }t\to0.
\end{equation}
However, \eqref{eq-inexact} does not follow automatically from \eqref{eq-asymp}.
\end{enumerate}}
\end{rem}

Since the key assumption \eqref{eq-inexact} is satisfied if
\begin{equation}\label{eq-exact}
\exists\,\tau>0\quad\mbox{such that }u(\bm x_0,t)=v(\bm x_0,t),\quad0\le t\le\tau,
\end{equation}
Theorem \ref{thm-inexact} definitely covers the uniqueness with usual exact data. On the contrary, by an argument of the reverse proposition, we can immediately obtain the following distinguishability property: if there exists a constant $\nu<\min\{\al_1,\be_1\}$ such that
\[
|u(\bm x_0,t)-v(\bm x_0,t)|\ge C\,t^\nu\quad\mbox{near }t=0,
\]
then there should be $\al_1\ne\be_1$.

In general, one can at most identify the parameters $q_j$ up to a multiplier $\ka$, but cannot further assert $q_j=r_j$ ($j=1,\ldots,m$) from \eqref{eq-inexact}. This is demonstrated by the following counterexample.

\begin{ex}\label{ex-counter}
{\rm In \eqref{eq-ibvp-u} and \eqref{eq-ibvp-v}, let us simply take
\[
\Om=(0,\pi),\quad f=g\equiv0\mbox{ in }\Om,\quad m=m'=1,\quad\al_1=\be_1,\quad q_1=4,\quad r_1=1,\quad L=-\pa_x^2.
\]
We pick any $x_0\in(0,\pi)\setminus\{\pi/2\}$ and select
\[
a(x)=\f{\sin2x}{2\cos x_0},\quad b(x)=\sin x.
\]
Then it is readily seen that
\[
a(x_0)=b(x_0)=\sin x_0,\quad\f{q_1}{r_1}=4=\f{a''(x_0)}{b''(x_0)}.
\]
However, employing the Mittag-Leffler function
\[
E_{\al_1,1}(z):=\sum_{\ell=0}^\infty\f{z^\ell}{\Ga(\al_1\ell+1)},
\]
we can easily obtain the explicit solutions
\[
u(x,t)=E_{\al_1,1}(-t^{\al_1})\f{\sin 2x}{2\cos x_0},\quad v(x,t)=E_{\al_1,1}(-t^{\al_1})\sin x
\]
and hence $u(x_0,t)=E_{\al_1,1}(-t^{\al_1})\sin x_0=v(x_0,t)$. This means that even the exact data for all $t\ge0$ fails to guarantee $q_1=r_1$.}
\end{ex}

Similarly to Theorem \ref{thm-inexact}, we can obtain the same uniqueness with 
\eqref{eq-inexact} for inhomogeneous problems, i.e., $a=b\equiv0$ in 
\eqref{eq-ibvp-u} and \eqref{eq-ibvp-v}.

\begin{thm}\label{thm-inexact1}
Assume {\rm\eqref{eq-cond-L1}--\eqref{eq-cond-L2}} and \eqref{eq-cond-L3}. Let $u,v$ satisfy \eqref{eq-ibvp-u} and \eqref{eq-ibvp-v} respectively with $a=b\equiv0$ in $\Om,$ where $f,g\in\cD(L^\ga)$ with $\ga>d/4$ and there exists a constant $\mu>-1$ such that
\begin{equation}\label{eq-rho}
\rho(t)\sim t^\mu\quad\mbox{as $t \downarrow 0$}.
\end{equation}
Fix sufficiently small $\tau>0,$ pick $\bm x_0\in\Om$ such that $f(\bm x_0)\ne0,g(\bm x_0)\ne0$ and denote $\ka:=f(\bm x_0)/g(\bm x_0)$.
\begin{enumerate}[label=(\alph*),leftmargin=*]
\item If there exist constants $C>0$ and $\nu>\min\{\al_1,\be_1\}+\mu$ such that \eqref{eq-inexact} is satisfied, then \eqref{eq-unique1} holds.
\item If further $f,g\in\cD(L^{1+\ga})$ and \eqref{eq-inexact} is satisfied with $\nu>2\min\{\al_1,\be_1\}+\mu,$ then \eqref{eq-inexact} implies \eqref{eq-unique2}.
\end{enumerate}
\end{thm}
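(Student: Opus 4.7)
The plan is to adapt the Laplace-transform argument of Theorem \ref{thm-inexact} to the inhomogeneous setting. Taking the Laplace transform of \eqref{eq-ibvp-u} with $a\equiv0$ gives $\wh u(\,\cdot\,,p)=\wh\rho(p)(L+Q(p))^{-1}f$ with $Q(p):=\sum_{j=1}^m q_j p^{\al_j},$ and similarly $\wh v(\,\cdot\,,p)=\wh\rho(p)(L+R(p))^{-1}g$ with $R(p):=\sum_{j=1}^{m'}r_j p^{\be_j}.$ The hypothesis \eqref{eq-rho} combined with $\rho\in L^1(\BR_+)$ yields, by a Watson-type argument, $\wh\rho(p)=C_\rho\Ga(\mu+1)p^{-\mu-1}(1+o(1))$ as $p\to\infty$ with some $C_\rho\ne0,$ while \eqref{eq-inexact} combined with the at-most exponential growth of $u,v$ (inherited from Lemma \ref{lem-fp} via Duhamel) yields, by splitting the Laplace integral at $t=\tau,$
\[
|\wh u(\bm x_0,p)-\wh v(\bm x_0,p)|\le Cp^{-\nu-1}\quad\mbox{as }p\to\infty.
\]
Dividing by $|\wh\rho(p)|\sim C p^{-\mu-1}$ converts this into the clean bound $|(L+Q(p))^{-1}f(\bm x_0)-(L+R(p))^{-1}g(\bm x_0)|=O(p^{-\nu+\mu})$.

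The central step is the asymptotic expansion of $(L+Q(p))^{-1}f(\bm x_0)$ as $p\to\infty.$ Iterating the resolvent identity $(L+Q)^{-1}=Q^{-1}-Q^{-1}L(L+Q)^{-1},$ together with the commutativity of fractional powers of $L$ with the resolvent, the bound \eqref{eq-tanabe}, and the embedding $\cD(L^\de)\hookrightarrow C(\ov\Om)$ for $\de>d/4,$ we obtain for part (a) (with $f\in\cD(L^\ga)$)
\[
(L+Q(p))^{-1}f(\bm x_0)=\f{f(\bm x_0)}{Q(p)}+o(Q(p)^{-1}),
\]
and for part (b) (with $f\in\cD(L^{1+\ga})$)
\[
(L+Q(p))^{-1}f(\bm x_0)=\f{f(\bm x_0)}{Q(p)}-\f{Lf(\bm x_0)}{Q(p)^2}+O(|Q(p)|^{-3}).
\]
Expanding $Q(p)^{-1}$ further as a geometric series in $\sum_{j\ge 2}(q_j/q_1)p^{\al_j-\al_1}$ yields an asymptotic series in negative powers of $p,$ in which every term at an exponent strictly greater than $-2\al_1$ has its coefficient equal to a polynomial in $q_j/q_1$ ($j=2,\ldots,m$) times $f(\bm x_0),$ while the $Lf$-contribution first appears exactly at $p^{-2\al_1}.$

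For part (a), if $\al_1\ne\be_1,$ say $\al_1<\be_1,$ the leading term $(f(\bm x_0)/q_1)p^{-\al_1}$ survives in the difference and contradicts the $O(p^{-\nu+\mu})$ decay since $\nu-\mu>\al_1.$ Hence $\al_1=\be_1,$ and matching leading coefficients forces $f(\bm x_0)/q_1=g(\bm x_0)/r_1,$ i.e., $q_1/r_1=\ka.$ For part (b), once the leading terms cancel, the next surviving contributions come from the multi-term corrections at orders $p^{\al_j-2\al_1}$ ($j=2,\ldots,m$) and $p^{\be_{j'}-2\be_1}$ ($j'=2,\ldots,m'$), all strictly above $p^{-2\al_1}$ since $\al_j,\be_{j'}>0.$ Arguing inductively on $j$ using $\nu-\mu>2\al_1,$ the largest unmatched term at each step forces $\al_j=\be_j$ and $q_j/r_j=\ka;$ any mismatch between $m$ and $m'$ would leave an uncancellable excess term above $p^{-2\al_1},$ so $m=m'.$

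The principal technical obstacle is the rigorous justification of the pointwise expansion at $\bm x_0$ with controllable remainders. Since pointwise evaluation demands estimates in $\cD(L^\de)\hookrightarrow C(\ov\Om)$ with $\de>d/4,$ the derivation relies on the fractional-power bound $\|L^\be(L+Q)^{-1}\|\le C|Q|^{\be-1}$ for $0\le\be\le1,$ combined with the commutativity of $L^\ga$ and $(L+Q)^{-1}.$ A secondary delicate point is that $\wh\rho(p)$ is known only to leading order, with remainder $o(p^{-\mu-1}),$ which is why the coefficient matching is performed after dividing out $\wh\rho;$ this is legitimate precisely because $C_\rho\ne0$ by the assumption $\rho(t)\sim t^\mu.$
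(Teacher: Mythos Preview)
Your argument is correct and takes a genuinely different route from the paper's. The paper reduces to the same estimate $w(\bm x_0,p)-y(\bm x_0,p)=O(p^{\mu-\nu})$ for $w:=(L+Q)^{-1}f$ and $y:=(L+R)^{-1}g$ (this is Lemma~\ref{lem-rho} together with \eqref{eq-est0}), but then expands $w,y$ via the generalized eigenfunction machinery of Lemma~\ref{lem-total} and the Laurent formula \eqref{eq-Laurant}: one approximates $f$ by finite sums $f_N\in\sum_{n\le N}P_nL^2(\Om)$, applies the exact resolvent expansion with eigennilpotents $D_n$ on each $P_nL^2(\Om)$, and controls the tail $f-f_N$ by \eqref{eq-tanabe}. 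Your approach bypasses all of this by iterating the algebraic identity $(L+Q)^{-1}=Q^{-1}-Q^{-1}L(L+Q)^{-1}$ and estimating the remainder pointwise through the interpolation bound $\|L^\be(L+Q)^{-1}\|\le C\,Q^{\be-1}$ together with the commutativity $L^\ga(L+Q)^{-1}=(L+Q)^{-1}L^\ga$. This is more economical for the present theorem and makes the non-symmetric case no harder than the symmetric one; the paper's spectral decomposition, on the other hand, is indispensable for Theorem~\ref{thm-unique} and so is developed once and reused.

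Two small points. First, under the stated hypothesis $f\in\cD(L^{1+\ga})$ with $\ga>d/4$, the second remainder $Q^{-2}(L+Q)^{-1}L^2f(\bm x_0)$ is in general only $o(|Q|^{-2})$, not $O(|Q|^{-3})$: the bound one gets is $O(|Q|^{\de-\ga-2})$ for any $\de\in(d/4,\ga)$. This is harmless, because your coefficient matching takes place entirely at exponents strictly above $-2\al_1$, where both the $Lf/Q^2$ term and this remainder are already $O(p^{-2\al_1})$. Second, your inductive step is cleanest if phrased via the identity
\[
\f{f(\bm x_0)}{Q}-\f{g(\bm x_0)}{R}=\f{f(\bm x_0)(\ka R-Q)}{\ka\,Q R},
\]
which, once $\al_j=\be_j$ and $q_j=\ka r_j$ for $j<\ell$, has numerator $\sum_{j\ge\ell}(\ka r_jp^{\be_j}-q_jp^{\al_j})$; this makes transparent that the leading surviving term sits at exponent $\max\{\al_\ell,\be_\ell\}-2\al_1$ and that all cross terms generated by the geometric expansion of $Q^{-1}$ involving only indices $<\ell$ cancel automatically against their $R^{-1}$ counterparts.
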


The assumption \eqref{eq-rho} describes the short-time asymptotic behavior of the temporal component $\rho$ of the source terms in \eqref{eq-ibvp-u} and \eqref{eq-ibvp-v}, which turns out to be essential in proving Theorem \ref{thm-inexact1}. Indeed, since the information \eqref{eq-inexact} is concerned only with 
small $t>0$, we need more localized assumption on $\rho$ than 
$\rho\in L^1(\BR_+)$.
On the other hand, the solution behavior near $t=0$ is also influenced by 
$\rho$. Therefore, compared with Theorem \ref{thm-inexact}, the difference $\nu-\mu$ of orders instead of $\nu$ itself appears in the conditions guaranteeing the uniqueness in Theorem \ref{thm-inexact1}. We notice that \eqref{eq-rho} indicates $\rho\not\equiv0$ near $t=0$ and is not restrictive because it covers a wide choice of $\rho$ behaving asymptotically like a power function for small $t>0$. However, it does exclude e.g.
\[
\rho(t)=\begin{cases}
0, & t=0,\\
\exp(-1/t), & 0<t<T,
\end{cases}
\]
which satisfies $\rho^{(i)}(0)=0$ for all $i=0,1,\ldots$. 
With such functions, it is usually rather difficult to obtain 
the uniqueness or the stability for inverse problems. 
On the contrary, the ill-posedness is reduced if there exists $i=0,1,\ldots$ 
such that $\rho^{(i)}(0)\ne0$, which falls within the framework of 
\eqref{eq-rho}.

Similarly to Example \ref{ex-counter}, one can easily construct a counterexample 
against $q_j=r_j$ ($j=1,\ldots,m$) in the framework of Theorem \ref{thm-inexact1}. Owing to multiple unknown parameters, even a usual type of data 
\eqref{eq-exact} does not yield the uniqueness in determining coefficients
$q_j, r_j$.

Finally, we discuss the uniqueness and the non-uniqueness by \eqref{eq-exact}.

\begin{thm}\label{thm-unique}
Assume that the elliptic operator $L$ is symmetric$,$ i.e.$,\ \bm b\equiv\bm0$ on $\ov\Om\,$. Let
\[
\ker(L-\la_n)=\rspan\{\vp_{n,k}\mid k=1,\ldots,d_n\},
\]
where $\vp_{n,k}\ (k=1,\ldots,d_n)$ are orthonormal eigenfunctions associated with $\la_n$. Define a countable set $\Si:=\{\la_n/\la_{n'}\mid n,n'\in\BN\}\subset\BR$.
\begin{enumerate}[label=(\alph*),leftmargin=*]
\item Let $u,v$ satisfy \eqref{eq-ibvp-u} and \eqref{eq-ibvp-v} respectively with $f=g\equiv0$ in $\Om$. Under the same assumptions in Theorem $\ref{thm-inexact},$ \eqref{eq-exact} holds if and only if
\begin{equation}\label{eq-iff1}
m=m',\quad\al_j=\be_j\quad\f{q_j}{r_j}=\f{L a(\bm x_0)}{L b(\bm x_0)}=:\ka\in\Si,\quad j=1,\ldots,m
\end{equation}
and there exist nonempty sets $M_\ka,M_\ka'\subset\BN$ satisfying
\begin{equation}\label{eq-def-mm}
\{\la_n\in\si(L)\mid n\in M_\ka\}=\{\ka\la_n\mid\la_n\in\si(L),\ n\in M_\ka'\}
\end{equation}
and a bijection $\te_\ka:M_\ka\longrightarrow M_\ka'$ such that
\begin{equation}\label{eq-pab}
\begin{cases}
P_n a(\bm x_0)=P_{\te_\ka(n)} b(\bm x_0),\ P_n L a(\bm x_0)=\ka\,P_{\te_\ka(n)} L b(\bm x_0), & n\in M_\ka,\\
P_n a(\bm x_0)=P_n L a(\bm x_0)=0, & n\not\in M_\ka,\\
P_n b(\bm x_0)=P_n L b(\bm x_0)=0, & n\not\in M_\ka'.
\end{cases}
\end{equation}
\item Let $u,v$ satisfy \eqref{eq-ibvp-u} and \eqref{eq-ibvp-v} respectively with $a=b\equiv0$ in $\Om$. Under the same assumptions in Theorem $\ref{thm-inexact1},$ \eqref{eq-exact} holds if and only if
\begin{equation}\label{eq-iff2}
m=m',\quad\al_j=\be_j,\quad\f{q_j}{r_j}=\f{f(\bm x_0)}{g(\bm x_0)}=:\ka\in\Si,\quad j=1,\ldots,m
\end{equation}
and there exist the same sets $M_\ka,M_\ka'\subset\BN$ and bijection $\te_\ka:M_\ka\longrightarrow M_\ka'$ as that in {\rm(1)} such that
\begin{equation}\label{eq-pfg}
\begin{cases}
P_n f(\bm x_0)=\ka\,P_{\te_\ka(n)} g(\bm x_0), & n\in M_\ka,\\
P_n f(\bm x_0)=0, & n\not\in M_\ka,\\
P_n g(\bm x_0)=0, & n\not\in M_\ka'.
\end{cases}
\end{equation}
\end{enumerate}
\end{thm}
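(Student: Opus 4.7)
My plan is to Laplace-transform and reduce both parts to matching meromorphic functions in the spectral variable. Since the solutions $u(\bm x_0,\,\cdot\,)$ and $v(\bm x_0,\,\cdot\,)$ are real-analytic on $\BR_+$ (they are represented by convergent series of multinomial Mittag-Leffler functions), the hypothesis $u(\bm x_0,t)=v(\bm x_0,t)$ on $[0,\tau]$ extends by analytic continuation to all $t\ge0$, and hence $\wh u(\bm x_0,p)=\wh v(\bm x_0,p)$ for all large $p>0$. Theorems~\ref{thm-inexact} and \ref{thm-inexact1} then deliver $m=m'$, $\al_j=\be_j$ and $q_j/r_j=\ka$ in the homogeneous and inhomogeneous cases, respectively, so the real content to establish is the spectral matching in \eqref{eq-pab} and \eqref{eq-pfg}. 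The ``if'' directions are a direct verification: substituting the stated conditions into the spectral formulas below gives term-by-term equality of the Laplace transforms, and inversion recovers \eqref{eq-exact}.

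For part (a), set $Q(p):=\sum_{j=1}^m q_j p^{\al_j}$, so that $\sum_j r_j p^{\be_j}=Q(p)/\ka$. Laplace-transforming \eqref{eq-ibvp-u} with $f\equiv0$ and using the symmetric spectral resolution (in which the nilpotent part of \eqref{eq-Laurant} vanishes by \eqref{eq-symm}), I obtain
\[
\wh u(\bm x_0,p)=\f1p\sum_{n=1}^\infty\f{Q(p)\,P_n a(\bm x_0)}{Q(p)+\la_n},\qquad \wh v(\bm x_0,p)=\f1p\sum_{n=1}^\infty\f{Q(p)\,P_n b(\bm x_0)}{Q(p)+\ka\la_n},
\]
where the denominator in the second sum comes from rewriting $Q(p)/\ka+\la_n=(Q(p)+\ka\la_n)/\ka$. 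Equating, canceling the common factor $Q(p)/p$ and setting $w=Q(p)$ (which sweeps $\BR_+$ bijectively as $p>0$), the problem collapses to
\[
\sum_{n=1}^\infty\f{P_n a(\bm x_0)}{w+\la_n}=\sum_{n=1}^\infty\f{P_n b(\bm x_0)}{w+\ka\la_n},
\]
an identity of meromorphic functions of $w\in\BC$ by analytic continuation.

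The decisive step, which I expect to be the main obstacle, is pole-residue matching. Since the $\la_n$ are pairwise distinct, the left side has a simple pole at $w=-\la_n$ with residue $P_n a(\bm x_0)$, and the right side has a simple pole at $w=-\ka\la_{n'}$ with residue $P_{n'} b(\bm x_0)$; distinctness excludes cancellations between distinct terms. Setting $M_\ka:=\{n\mid P_n a(\bm x_0)\ne0\}$ and $M_\ka':=\{n'\mid P_{n'} b(\bm x_0)\ne0\}$, the identity forces a unique bijection $\te_\ka:M_\ka\to M_\ka'$ characterized by $\la_n=\ka\la_{\te_\ka(n)}$---this is \eqref{eq-def-mm}---together with residue matching $P_n a(\bm x_0)=P_{\te_\ka(n)} b(\bm x_0)$, the first line of \eqref{eq-pab}. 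The second line of \eqref{eq-pab} is then automatic from the symmetric identity $L P_n=\la_n P_n$. Finally $M_\ka\ne\emptyset$ because $L a(\bm x_0)=\sum_n\la_n P_n a(\bm x_0)\ne0$, and so $\ka=\la_n/\la_{\te_\ka(n)}\in\Si$ for any $n\in M_\ka$.

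Part (b) is completely parallel: with $a=b\equiv0$ the Laplace transforms read
\[
\wh u(\bm x_0,p)=\wh\rho(p)\sum_{n=1}^\infty\f{P_n f(\bm x_0)}{Q(p)+\la_n},\qquad \wh v(\bm x_0,p)=\wh\rho(p)\sum_{n=1}^\infty\f{\ka\,P_n g(\bm x_0)}{Q(p)+\ka\la_n},
\]
and assumption \eqref{eq-rho} ensures $\wh\rho(p)\not\equiv0$, so it cancels; the same meromorphic-matching argument then yields $P_n f(\bm x_0)=\ka P_{\te_\ka(n)} g(\bm x_0)$ for $n\in M_\ka$, which is \eqref{eq-pfg}, with the factor $\ka$ arising exactly as in the computation above. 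A side technicality I will need to justify is uniform convergence of these spectral series on compact subsets of $\BC\setminus(-\si(L))$, which follows from $a,b,f,g\in\cD(L^\ga)$ with $\ga$ large enough, the Weyl growth of $\la_n$, and Sobolev embedding applied to $\vp_{n,k}(\bm x_0)$.
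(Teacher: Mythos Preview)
Your proposal is correct and follows essentially the same route as the paper: time-analyticity extends \eqref{eq-exact} to all $t>0$, Theorems~\ref{thm-inexact}/\ref{thm-inexact1} give the parameter identities, the Laplace transform reduces everything to a meromorphic identity in the spectral variable, and residue comparison at the simple poles yields \eqref{eq-pab}/\eqref{eq-pfg}. The paper phrases the residue extraction via Cauchy's integral theorem on small circles and splits into the cases $\ka=1$ and $\ka\ne1$ (proving $\ka\in\Si$ by contradiction in the latter), but this is the same mechanism as your direct pole-matching.

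One point worth noting: your sets $M_\ka=\{n:P_n a(\bm x_0)\ne0\}$ and $M_\ka'=\{n':P_{n'}b(\bm x_0)\ne0\}$ are the \emph{minimal} choice satisfying the theorem's conditions, whereas the paper takes the \emph{maximal} spectral-overlap sets $M_\ka=\{n:\la_n\in\ka\,\si(L)\}$, $M_\ka'=\{n':\ka\la_{n'}\in\si(L)\}$ and then shows the projections vanish off these. Both are legitimate witnesses for the existential statement, and your choice makes the vanishing clauses of \eqref{eq-pab} hold by definition, which is a mild simplification. Your observation that the $La$ relation in \eqref{eq-pab} follows automatically from the $a$ relation via $P_nL=\la_nP_n$ and $\la_n=\ka\la_{\te_\ka(n)}$ is also a small economy over the paper, which derives a second meromorphic identity for $P_nLa$ separately.
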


\begin{rem}\label{rem-ka1}
{\rm Theorem \ref{thm-unique} characterizes the coincidence \eqref{eq-exact} of observation data via initial values $a,b$ or source terms $f,g$. From the definition of $\Si$, it is obvious that $1\in\Si$. We discuss the cases of $\ka=1$, $\ka\notin\Si$ and $\ka\in\Si\setminus\{1\}$ separately.
\begin{enumerate}[label=(\alph*),leftmargin=*]
\item If $\ka=1$, i.e., $L a(\bm x_0)=L b(\bm x_0)$ or $f(\bm x_0)=g(\bm x_0)$, then $M_1=M_1'=\BN$ and $\te_1$ is the identity operator. 
In this case, it is readily seen that \eqref{eq-pab} and \eqref{eq-pfg} are equivalent to
\begin{gather}
P_n a(\bm x_0)=P_n b(\bm x_0),\quad P_n L a(\bm x_0)=P_n L b(\bm x_0),\quad\forall\,n\in\BN,\label{eq-pab1}\\
P_n f(\bm x_0)=P_n g(\bm x_0),\quad\forall\,n\in\BN,\nonumber
\end{gather}
respectively.
\item If $\ka\notin\Si$, then Theorem \ref{thm-unique} asserts that the observation data $u(\bm x_0,t)$ and $v(\bm x_0,t)$ ($0\le t\le\tau$) cannot coincide.
\item If $\ka\in\Si\setminus\{1\}$, then it is indicated in \eqref{eq-pab} and \eqref{eq-pfg} that the initial values $a,b$ or the source terms $f,g$ should satisfy a rather special relation. To illustrate this point, we provide the following example.
\end{enumerate}}
\end{rem}

\begin{ex}\label{ex-special}
{\rm Similarly to Example \ref{ex-counter}, in \eqref{eq-ibvp-u} and \eqref{eq-ibvp-v} we take
\[
\Om=(0,\pi),\quad f=g\equiv0\mbox{ in }\Om,\quad \ka=\f{L a(x_0)}{L b(x_0)}=4,\quad L=-\pa_x^2.
\]
Then it is readily seen that
\[
\la_n=n^2,\quad\Si=\{(n/n')^2\mid n,n'\in\BN\}=\mathbb Q^2\setminus\{0\},\quad P_n f=(f,\vp_n)\vp_n,\ f\in L^2(0,\pi),
\]
where $\vp_n(x):=\sqrt{2/\pi}\sin n x$ ($n\in\BN$). Further, by the definitions of $M_\ka,M'_\ka$ and $\te_\ka$, it is straightforward to verify that $M_4=2\BN,M'_4=\BN$ and $\te_4(n)=n/2$. Therefore, the relation \eqref{eq-pab} can be rephrased as
\[
(a,\vp_{2n})\vp_{2n}(x_0)=(b,\vp_n)\vp(x_0),\quad(a,\vp_{2n-1})\vp_{2n-1}(x_0)=0,\quad n\in\BN.
\]
Especially, if $x_0\ne\pi\mathbb Q$, i.e., $x_0$ is not a zero of $\vp_n$ ($\forall\,n\in\BN$), then $a$ should take the special form of
\[
a(x)=\sum_{n=1}^\infty\f{(b,\vp_n)}{\vp_{2n}(x_0)}\vp_{2n}(x_0)=\f1{\sqrt{2\pi}}\sum_{n=1}^\infty\f{(b,\vp_n)}{\cos n x_0}\sin2n x,
\]
which is a generalization of Example \ref{ex-counter}.}
\end{ex}

On the same direction of Theorem \ref{thm-unique} and Example \ref{ex-special}, it turns out that in a special case, we can even obtain the uniqueness of 
$a,b$ or $f,g$.

\begin{coro}\label{coro-unique}
In addition to the same assumptions in Theorem $\ref{thm-unique},$ 
we assume that all the eigenvalues of $L$ are simple and
\begin{equation}\label{eq-omega}
\bm x_0\not\in\om:=\bigcup_{n=1}^\infty\{\bm x\in\Om\mid\vp_n(\bm x)=0\}\subset\Om,
\end{equation}
where $\vp_n$ is an eigenfunction associated with $\la_n$. 
Let $\Si,M_\ka,M_\ka',\te_\ka$ be the same as that in Theorem 
$\ref{thm-unique}$.
\begin{enumerate}[label=(\alph*),leftmargin=*]
\item Let $u,v$ satisfy \eqref{eq-ibvp-u} and \eqref{eq-ibvp-v} respectively with $f=g\equiv0$ in $\Om$. Then \eqref{eq-exact} holds if and only if \eqref{eq-iff1} holds and
\begin{equation}\label{eq-rel-ab}
\begin{aligned}
& a=\sum_{n\in M_\ka}(a,\vp_n)\vp_n,\quad b=\sum_{n\in M_\ka'}(b,\vp_n)\vp_n\\
& \mbox{with }(a,\vp_n)\vp_n(\bm x_0)=(b,\vp_{\te_\ka(n)})\vp_{\te_\ka(n)}(\bm x_0),\ n\in M_\ka.
\end{aligned}
\end{equation}
Especially if $\ka=1,$ i.e. $L a(\bm x_0)=L b(\bm x_0),$ then \eqref{eq-rel-ab} is equivalent to $a\equiv b$ in $\Om$.
\item Let $u,v$ satisfy \eqref{eq-ibvp-u} and \eqref{eq-ibvp-v} respectively with $a=b\equiv0$ in $\Om$. Then \eqref{eq-exact} holds if and only if \eqref{eq-iff2} holds and
\begin{equation}\label{eq-rel-fg}
\begin{aligned}
& f=\sum_{n\in M_\ka}(f,\vp_n)\vp_n,\quad g=\sum_{n\in M_\ka'}(g,\vp_n)\vp_n\\
& \mbox{with }(f,\vp_n)\vp_n(\bm x_0)=\ka(g,\vp_{\te_\ka(n)})\vp_{\te_\ka(n)}(\bm x_0),\ n\in M_\ka.
\end{aligned}
\end{equation}
Especially if $\ka=1,$ i.e. $f(\bm x_0)=g(\bm x_0),$ then \eqref{eq-rel-fg} is equivalent to $f\equiv g$ in $\Om$.
\end{enumerate}
\end{coro}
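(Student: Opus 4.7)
The plan is to specialize Theorem~\ref{thm-unique} to the present setting and recast its abstract projection identities as concrete Fourier-coefficient identities for $a,b,f,g$. Essentially all of the work is already absorbed in Theorem~\ref{thm-unique}; what remains is to exploit the two added hypotheses---simple eigenvalues and $\bm x_0\notin\om$---to make the translation explicit, together with a short verification that the conditions on $L a$, $L b$ in \eqref{eq-pab} carry no independent information in this setting.

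First I would record two elementary observations. Because every eigenvalue of $L$ is simple, the eigenprojections collapse to rank-one operators
\[
P_n h=(h,\vp_n)\vp_n,\qquad h\in L^2(\Om),
\]
so that $P_n h(\bm x_0)=(h,\vp_n)\vp_n(\bm x_0)$. The hypothesis $\bm x_0\notin\om$ means $\vp_n(\bm x_0)\ne0$ for every $n$, hence $P_n h(\bm x_0)=0$ if and only if $(h,\vp_n)=0$. Moreover, since $L$ is symmetric and $0\notin\si(L)$, one has $P_n L h=L P_n h=\la_n P_n h$ with $\la_n\ne0$, and for paired indices $n\in M_\ka$ the defining relation \eqref{eq-def-mm} forces $\la_n=\ka\,\la_{\te_\ka(n)}$ because the bijection $\te_\ka$ is canonically determined by matching $\la_n$ with $\ka\,\la_{\te_\ka(n)}$.

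For part~(a), I would invoke Theorem~\ref{thm-unique}(a) and read off \eqref{eq-pab}. The vanishing alternatives $P_n a(\bm x_0)=0$ for $n\notin M_\ka$ and $P_n b(\bm x_0)=0$ for $n\notin M_\ka'$ immediately produce the series truncations $a=\sum_{n\in M_\ka}(a,\vp_n)\vp_n$ and $b=\sum_{n\in M_\ka'}(b,\vp_n)\vp_n$, while the active identity $P_n a(\bm x_0)=P_{\te_\ka(n)}b(\bm x_0)$ for $n\in M_\ka$ becomes exactly the remaining coefficient equation in \eqref{eq-rel-ab}. The companion identity on $L a$ and $L b$ then reduces, after substituting $P_n L=\la_n P_n$ and $\la_n=\ka\,\la_{\te_\ka(n)}$, to $\la_n P_n a(\bm x_0)=\la_n P_{\te_\ka(n)}b(\bm x_0)$, which is automatic after division by $\la_n\ne0$; the analogous reduction handles the $L a, L b$ vanishing alternatives. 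The converse implication from \eqref{eq-iff1} and \eqref{eq-rel-ab} back to \eqref{eq-pab} is a direct reassembly, closing the equivalence. For the special case $\ka=1$, Remark~\ref{rem-ka1}(a) gives $M_1=M_1'=\BN$ and $\te_1=\mathrm{id}$; dividing the surviving coefficient identity by the nonzero factor $\vp_n(\bm x_0)$ and invoking completeness of $\{\vp_n\}$ in $L^2(\Om)$ yields $a\equiv b$.

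Part~(b) proceeds by the same template with $f,g$ in place of $a,b$, invoking Theorem~\ref{thm-unique}(b); since no operator $L$ appears in \eqref{eq-pfg}, the translation is even more direct, and the case $\ka=1$ again gives $f\equiv g$ by completeness. The only conceptual obstacle is verifying that the two alternatives in \eqref{eq-pab} involving $L a$ and $L b$ are genuinely redundant under simple eigenvalues---this is precisely where the symmetry of $L$ together with $0\notin\si(L)$ enter, via $P_n L=\la_n P_n$ with $\la_n\ne0$; once that redundancy is recognized, the rest of the argument is a transparent transcription of \eqref{eq-pab} and \eqref{eq-pfg} into \eqref{eq-rel-ab} and \eqref{eq-rel-fg}.
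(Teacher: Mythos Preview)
Your proposal is correct and follows essentially the same approach as the paper: reduce to Theorem~\ref{thm-unique}, use simplicity of the eigenvalues to write $P_n h=(h,\vp_n)\vp_n$ and $P_n L h=\la_n(h,\vp_n)\vp_n$, use $\vp_n(\bm x_0)\ne0$ to pass from vanishing of $P_n h(\bm x_0)$ to vanishing of $(h,\vp_n)$, and observe that the $L a,L b$ clauses in \eqref{eq-pab} become redundant via $\la_n=\ka\,\la_{\te_\ka(n)}$ with $\la_n\ne0$. Your treatment of the redundancy is slightly more explicit than the paper's, but the argument is otherwise identical.
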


The assumption \eqref{eq-omega} is a rank 
condition, which is widely used for observability of partial 
differential equations (e.g. \cite{S75}). In some special cases such as that in Example \ref{ex-special}, the Lebesgue measure of $\om$ defined in \eqref{eq-omega} is zero. Therefore, the uniqueness claimed in Corollary \ref{coro-unique} is generic, which is rather surprising because the single point observation can even almost determine the initial value or the spatial component of the source term uniquely.

\section{Proofs of Main Results}\label{sec-proof}

For the proof of Theorem \ref{thm-inexact}, we need the completeness of all the generalized eigenfunctions of $L$.

\begin{lem}\label{lem-total}
\begin{enumerate}[label=(\alph*),leftmargin=*]
\item For any $h\in L^2(\Om),$ there exists a sequence $\{h_N\}_{N\in\BN}\subset L^2(\Om)$ such that
\[
h_N\in\sum_{n=1}^N P_n L^2(\Om),\quad\lim_{N\to\infty}\|h-h_N\|_{L^2(\Om)}=0.
\]
\item For any $h\in\cD(L^\ga)$ with $\ga\ge0,$ there exists a sequence $\{h_N\}_{N\in\BN}\subset\cD(L^\ga)$ such that
\[
h_N\in\sum_{n=1}^N P_n L^2(\Om),\quad\lim_{N\to\infty}\|L^\ga(h-h_N)\|_{L^2(\Om)}=0.
\]
\end{enumerate}
\end{lem}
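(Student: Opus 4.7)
The plan is to establish part (a) via a Hahn--Banach duality argument, reducing it to the classical completeness of generalized eigenfunctions of the formal adjoint $L^\ast$, and then to deduce part (b) from part (a) by exploiting the invariance of each $P_n L^2(\Om)$ under fractional powers of $L$.

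For part (a), I would set $V := \bigcup_{N\in\BN}\sum_{n=1}^N P_n L^2(\Om)$ and show that $\ov V = L^2(\Om)$. By Hahn--Banach it suffices to check that any $w\in L^2(\Om)$ satisfying $(P_n h, w) = 0$ for every $h\in L^2(\Om)$ and every $n\in\BN$ must vanish. Such a $w$ satisfies $P_n^\ast w = 0$ for all $n$, and taking adjoints in the contour representation
\[
P_n = -\f1{2\pi\ri}\int_{\ga_n}(L-z)^{-1}\,\rd z
\]
shows that $P_n^\ast$ is precisely the eigenprojection of the formal adjoint $L^\ast$ associated with $\ov{\la_n}$. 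Since $L^\ast$ is a strongly elliptic second-order operator on $H^2(\Om)\cap H_0^1(\Om)$ with compact resolvent (via the compact embedding $H^2\cap H_0^1\hookrightarrow L^2$) and satisfies a sectorial resolvent estimate analogous to \eqref{eq-tanabe}, the classical Agmon--Browder--Keldysh completeness theorem (see e.g.\ Agmon's lectures on elliptic boundary value problems or Dunford--Schwartz Vol.~III, Chap.~XIX) asserts that the generalized eigenfunctions of $L^\ast$ span a dense subspace of $L^2(\Om)$. Hence $w = 0$ and the density of $V$ follows.

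For part (b), given $h\in\cD(L^\ga)$, I would apply part (a) to $L^\ga h\in L^2(\Om)$ to obtain $\wt h_N \in \sum_{n=1}^N P_n L^2(\Om)$ with $\wt h_N\to L^\ga h$ in $L^2(\Om)$. Setting $h_N := L^{-\ga}\wt h_N$, the inclusion $L^{-\ga}P_n L^2(\Om)\subset P_n L^2(\Om)$ recalled in Section~\ref{sec-prelim} ensures $h_N \in \sum_{n=1}^N P_n L^2(\Om)\subset\cD(L^\ga)$. Since $L^\ga$ is the inverse of $L^{-\ga}$ on this finite-dimensional invariant subspace, $L^\ga h_N = \wt h_N$, so $\|L^\ga(h-h_N)\|_{L^2(\Om)}\to 0$ as $N\to\infty$.

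The hard part will be justifying the hypotheses of the completeness theorem for the non-symmetric $L^\ast$: besides compactness of the resolvent, one needs decay of $(L^\ast-z)^{-1}$ along a whole sector of the complex plane rather than merely along the positive real axis as in \eqref{eq-tanabe}. This sectorial estimate is classical for strongly elliptic second-order operators under \eqref{eq-cond-L1}--\eqref{eq-cond-L2}, but I would want to cite it precisely, since dropping the symmetry of $L$ is exactly the feature that distinguishes the present setting from the standard self-adjoint spectral decomposition. Once the sectorial estimate is in place, the completeness of the root system of $L^\ast$ is a classical consequence, the duality argument closes part (a), and part (b) is then a routine corollary.
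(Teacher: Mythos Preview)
Your part (b) is correct and matches the paper's argument almost verbatim: apply part (a) to $L^\ga h$, then pull back by $L^{-\ga}$, using that each $P_n L^2(\Om)$ is invariant under $L^{-\ga}$.

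For part (a), however, the duality detour through $L^\ast$ has a logical slip. From $(P_n h,w)=0$ for all $h$ you correctly deduce $P_n^\ast w=0$, and you correctly identify $P_n^\ast$ as the Riesz projection of $L^\ast$ at $\ov{\la_n}$. But the implication ``$P_n^\ast w=0$ for all $n$ and the root vectors of $L^\ast$ are complete, hence $w=0$'' does not follow. For non-orthogonal projections, density of $\bigcup_n P_n^\ast L^2(\Om)$ is \emph{not} the same as $\bigcap_n\ker P_n^\ast=\{0\}$. In fact, since $\ker P_n^\ast=(P_n L^2(\Om))^\perp$, the condition $P_n^\ast w=0$ for all $n$ is exactly $w\in V^\perp$, where $V$ is the span of the generalized eigenfunctions of $L$ itself. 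So what you need to conclude $w=0$ is precisely the density of $V$, i.e.\ completeness for $L$ --- the very statement you set out to prove. Invoking completeness for $L^\ast$ instead gives $\bigcap_n\ker P_n=\{0\}$, which is the dual statement and does not close your argument.

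The fix is to drop the Hahn--Banach detour entirely: Agmon's completeness theorem applies directly to $L$ (it satisfies the same ellipticity and sectorial-resolvent hypotheses you list for $L^\ast$), yielding $\ov V=L^2(\Om)$ in one stroke. This is exactly what the paper does --- it simply cites Agmon for $L$ and moves on. Your instinct to invoke the Agmon/Keldysh machinery is right; you just applied it to the wrong operator.
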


\begin{proof}
\begin{enumerate}[label=(\alph*),leftmargin=*]
\item It follows immediately from the density of the linear subspace spanned by all generalized eigenfunctions of $L$ in $L^2(\Om)$ (see Agmon \cite{A65}).
\item Since $h\in\cD(L^\ga)$ implies $L^\ga h\in L^2(\Om)$, we can apply (i) to choose a sequence $\{\psi_N\}_{N\in\BN}\subset L^2(\Om)$ such that
\[
\psi_N\in\sum_{n=1}^N P_n L^2(\Om),\quad\lim_{N\to\infty}\|\psi_N-L^\ga h\|_{L^2(\Om)}=0.
\]
Owing to \eqref{eq-cond-L3}, we see that $L^{-\ga}:L^2(\Om)\longrightarrow L^2(\Om)$ exists and is bounded, indicating
\[
\lim_{N\to\infty}\|L^\ga(L^{-\ga}\psi_N)-L^\ga h\|_{L^2(\Om)}=0.
\]
By the definition of $L^{-\ga}$, we see that $h_N:=L^{-\ga}\psi_N\in\sum_{n=1}^N P_n L^2(\Om)\subset\cD(L^\ga)$ and thus
\[
\lim_{N\to\infty}\|L^\ga h_N-L^\ga h\|_{L^2(\Om)}=0.
\]
The proof of Lemma \ref{lem-total} is complete.
\end{enumerate}
\end{proof}

\begin{proof}[Proof of Theorem $\ref{thm-inexact}$]
Let $f=g\equiv0$ in $\Om$ in \eqref{eq-ibvp-u} and \eqref{eq-ibvp-v}. We divide the proof into 4 steps.

{\bf Step 1 } To begin with, we make some overall preparations. Since the assumption $\ga-1>d/4$ implies the Sobolev embedding $\cD(L^{\ga-1})\subset C(\ov\Om)$, it follows from Lemma \ref{lem-fp} that $u(\bm x_0,t)$ is well-defined for $t>0$ and fulfills the estimate
\[
|u(\bm x_0,t)|\le\|u(\,\cdot\,,t)\|_{C(\ov\Om)}\le C\|u(\,\cdot\,,t)\|_{\cD(L^{\ga-1})}\le C\,\e^{C t}\|a\|_{\cD(L^{\ga-1})},\quad t>0.
\]
The similar estimate also holds for $v(\bm x_0,t)$, which, together with the key assumption \eqref{eq-inexact}, indicates
\[
|u(\bm x_0,t)-v(\bm x_0,t)|\le\begin{cases}
C\,t^\nu, & 0\le t\le\tau,\\
C\,\e^{C t}, & t>\tau.
\end{cases}
\]
Then for large $p>0$, we estimate the Laplace transform of $u(\bm x_0,t)-v(\bm x_0,t)$ from above as
\begin{align*}
\wh u(\bm x_0,p)-\wh v(\bm x_0,p) & \le\int_{\BR_+}\e^{-p t}|u(\bm x_0,t)-v(\bm x_0,t)|\,\rd t\le C\int_0^\tau\e^{-p t}t^\nu\,\rd t+C\int_\tau^\infty\e^{-(p-C)t}\,\rd t\\
& \le C\int_{\BR_+}\e^{-p t}t^\nu\,\rd t+\f{C\,\e^{-(p-C)\tau}}{p-C}=\f{C\,\Ga(\nu+1)}{p^{\nu+1}}+o(p^{-\nu-1})\le C\,p^{-\nu-1}.
\end{align*}
We can similarly obtain the lower estimate $\wh u(\bm x_0,p)-\wh v(\bm x_0,p)\ge-C\,p^{-\nu-1}$ for large $p>0$ and hence
\begin{equation}\label{eq-est0}
\wh u(\bm x_0,p)-\wh v(\bm x_0,p)=O(p^{-\nu-1})\quad\mbox{for large }p>0.
\end{equation}

Now we shall represent $\wh u(\bm x_0,p)$ and $\wh v(\bm x_0,p)$ explicitly for
large $p > 0$. Performing the Laplace transform in \eqref{eq-ibvp-u} and using the formula
\[
\wh{\pa_t^\al h}(p)=p^\al\wh h(p)-p^{\al-1}h(0),
\]
we derive the boundary value problem for an elliptic equation for $\wh u(\,\cdot\,,p)$:
\[
\left\{\begin{alignedat}{2}
& \!\!\left(\sum_{j=1}^m q_j p^{\al_j}+L\right)\wh u(\,\cdot\,,p)=a\sum_{j=1}^m q_j p^{\al_j-1} & \quad & \mbox{in }\Om,\\
& \wh u(\,\cdot\,,p)=0 & \quad & \mbox{on }\pa\Om.
\end{alignedat}
\right.
\]
Transforming \eqref{eq-ibvp-v} in the same manner and taking the inverses, we obtain
\begin{align}
\wh u(\,\cdot\,,p) & =\f1p\sum_{j=1}^m q_j p^{\al_j}\left(L+\sum_{j=1}^m q_j p^{\al_j}\right)^{-1}a,\label{eq-exp-u}\\
\wh v(\,\cdot\,,p) & =\f1p\sum_{j=1}^{m'}r_j p^{\be_j}\left(L+\sum_{j=1}^{m'}r_j p^{\be_j}\right)^{-1}b.\label{eq-exp-v}
\end{align}
According to Lemma \ref{lem-total}, there exist $a_N=\sum_{n=1}^N a_{N,n}$ and $b_N=\sum_{n=1}^N b_{N,n}$ ($N\in\BN$) such that
\begin{equation}\label{eq-cov1}
a_{N,n},b_{N,n}\in P_n L^2(\Om),\quad\lim_{N\to\infty}\|L^\ga(a-a_N)\|_{L^2(\Om)}=\lim_{N\to\infty}\|L^\ga(b-b_N)\|_{L^2(\Om)}=0.
\end{equation}
Especially, since $\ga>1+d/4$, the Sobolev embedding implies
\begin{equation}\label{eq-cov2}
\lim_{N\to\infty}L a_N(\bm x_0)=L a(\bm x_0),\quad\lim_{N\to\infty}L b_N(\bm x_0)=L b(\bm x_0).
\end{equation}
Then for any fixed $N\in\BN$, we decompose $a=a_N+(a-a_N)$ and employ the expansion \eqref{eq-Laurant} to rewrite \eqref{eq-exp-u} as
\begin{align}
p\,\wh u(\,\cdot\,,p) & =\sum_{j=1}^m q_j p^{\al_j}\left(L+\sum_{j=1}^m q_j p^{\al_j}\right)^{-1}\left\{\sum_{n=1}^N a_{N,n}+(a-a_N)\right\}\nonumber\\
& =\sum_{j=1}^m q_j p^{\al_j}\sum_{n=1}^N\left(L+\sum_{j=1}^m q_j p^{\al_j}\right)^{-1}P_n a_{N,n}\nonumber\\
& \quad\,+\left\{\left(L+\sum_{j=1}^m q_j p^{\al_j}\right)-L\right\}\left(L+\sum_{j=1}^m q_j p^{\al_j}\right)^{-1}(a-a_N)\nonumber\\
& =\sum_{j=1}^m q_j p^{\al_j}\sum_{n=1}^N\sum_{k=0}^{d_n-1}\f{(-1)^k D_n^k a_{N,n}}{(\la_n+\sum_{j=1}^m q_j p^{\al_j})^{k+1}}+(a-a_N)-L\left(L+\sum_{j=1}^m q_j p^{\al_j}\right)^{-1}(a-a_N)\nonumber\\
& =\sum_{n=1}^N\left(1-\f{\la_n}{\la_n+\sum_{j=1}^m q_j p^{\al_j}}\right)a_{N,n}+\sum_{j=1}^m q_j p^{\al_j}\sum_{n=1}^N\sum_{k=1}^{d_n-1}\f{(-1)^k D_n^k a_{N,n}}{(\la_n+\sum_{j=1}^m q_j p^{\al_j})^{k+1}}\nonumber\\
& \quad\,+a-a_N-\left(L+\sum_{j=1}^m q_j p^{\al_j}\right)^{-1}L(a-a_N)\nonumber\\
& =a-\sum_{n=1}^N\f{\la_n a_{N,n}}{\la_n+\sum_{j=1}^m q_j p^{\al_j}}+\sum_{j=1}^m q_j p^{\al_j}\sum_{n=1}^N\sum_{k=1}^{d_n-1}\f{(-1)^k D_n^k a_{N,n}}{(\la_n+\sum_{j=1}^m q_j p^{\al_j})^{k+1}}\nonumber\\
& \quad\,-\left(L+\sum_{j=1}^m q_j p^{\al_j}\right)^{-1}L(a-a_N).\label{eq-exp-u1}
\end{align}
Rewriting \eqref{eq-exp-v} in the same manner, we obtain
\begin{align}
p\,\wh v(\,\cdot\,,p) & =b-\sum_{n=1}^N\f{\la_n b_{N,n}}{\la_n+\sum_{j=1}^{m'}r_j p^{\be_j}}+\sum_{j=1}^{m'}r_j p^{\be_j}\sum_{n=1}^N\sum_{k=1}^{d_n-1}\f{(-1)^k D_n^k b_{N,n}}{(\la_n+\sum_{j=1}^{m'}r_j p^{\be_j})^{k+1}}\nonumber\\
& \quad\,-\left(L+\sum_{j=1}^{m'}r_j p^{\be_j}\right)^{-1}L(b-b_N).\label{eq-exp-v1}
\end{align}
Taking $\bm x=\bm x_0$ in \eqref{eq-exp-u1}--\eqref{eq-exp-v1} and substituting them into \eqref{eq-est0}, we notice \eqref{eq-a=b} to arrive at
\begin{equation}\label{eq-est1}
\sum_{i=1}^3(I_N^i(p)-J_N^i(p))=O(p^{-\nu})\quad\mbox{for large } p>0,
\end{equation}
where
\begin{alignat*}{2}
I_N^1(p) & :=\sum_{n=1}^N\f{\la_n a_{N,n}(\bm x_0)}{\la_n+\sum_{j=1}^m q_j p^{\al_j}}, & \quad J_N^1(p) & :=\sum_{n=1}^N\f{\la_n b_{N,n}(\bm x_0)}{\la_n+\sum_{j=1}^{m'}r_j p^{\be_j}},\\
I_N^2(p) & :=\sum_{j=1}^m q_j p^{\al_j}\sum_{n=1}^N\sum_{k=1}^{d_n-1}\f{(-1)^{k+1}(D_n^k a_{N,n})(\bm x_0)}{(\la_n+\sum_{j=1}^m q_j p^{\al_j})^{k+1}}, & \quad J_N^2(p) & :=\sum_{j=1}^{m'}r_j p^{\be_j}\sum_{n=1}^N\sum_{k=1}^{d_n-1}\f{(-1)^{k+1}(D_n^k b_{N,n})(\bm x_0)}{(\la_n+\sum_{j=1}^{m'}r_j p^{\be_j})^{k+1}},\\
I_N^3(p) & :=\left(\left(L+\sum_{j=1}^m q_j p^{\al_j}\right)^{-1}L(a-a_N)\right)(\bm x_0), & \quad J_N^3(p) & :=\left(\left(L+\sum_{j=1}^{m'}r_j p^{\be_j}\right)^{-1}L(b-b_N)\right)(\bm x_0).
\end{alignat*}
Here for $I_N^3(p)$, it follows from the Sobolev embedding and the estimate \eqref{eq-tanabe} that
\begin{align}
|I_N^3(p)| & \le\left\|\left(L+\sum_{j=1}^m q_j p^{\al_j}\right)^{-1}L(a-a_N)\right\|_{C(\ov\Om)}\le C\left\|\left(L+\sum_{j=1}^m q_j p^{\al_j}\right)^{-1}L(a-a_N)\right\|_{\cD(L^{\ga-1})}\nonumber\\
& \le C\left\|\left(L+\sum_{j=1}^m q_j p^{\al_j}\right)^{-1}L^\ga(a-a_N)\right\|_{L^2(\Om)}\le\f C{\sum_{j=1}^m q_j p^{\al_j}}\|L^\ga(a-a_N)\|_{L^2(\Om)}\nonumber\\
& \le C\,p^{-\al_1}\|a-a_N\|_{\cD(L^\ga)}.\label{eq-est-i3}
\end{align}
Similarly, for $J_N^3(p)$ we have
\begin{equation}\label{eq-est-j3}
|J_N^3(p)|\le C\,p^{-\be_1}\|b-b_N\|_{\cD(L^\ga)}.
\end{equation}
Finally, we note that the regularity of $a,b$ guarantees the uniform boundedness of
\[
\sum_{n=1}^N\la_n a_{N,n}(\bm x_0),\quad\sum_{n=1}^N\sum_{k=1}^{d_n-1}(-1)^{k+1}(D_n^k a_{N,n})(\bm x_0),\quad\sum_{n=1}^N\la_n b_{N,n}(\bm x_0),\quad\sum_{n=1}^N\sum_{k=1}^{d_n-1}(-1)^{k+1}(D_n^k b_{N,n})(\bm x_0)
\]
for all $N\in\BN$. Especially, by the identity $L P_n=\la_n P_n+D_n$ we know that
\begin{equation}\label{eq-lan}
\begin{aligned}
\sum_{n=1}^N\la_n a_{N,n}(\bm x_0)+\sum_{n=1}^N D_n a_{N,n}(\bm x_0) & =\sum_{n=1}^N L a_{N,n}(\bm x_0)=L a_N(\bm x_0),\\
\sum_{n=1}^N\la_n b_{N,n}(\bm x_0)+\sum_{n=1}^N D_n b_{N,n}(\bm x_0) & =\sum_{n=1}^N L b_{N,n}(\bm x_0)=L b_N(\bm x_0).
\end{aligned}
\end{equation}

{\bf Step 2 } Now we are well prepared to prove part (i). We starting with showing $\al_1=\be_1$ by contradiction.

First let us assume that $\al_1<\be_1$. Our strategy is first multiplying both sides of \eqref{eq-est1} by $p^{\al_1}$, then passing $N\to\infty$ and finally passing $p\to\infty$ to deduce a contradiction. To this end, we calculate $p^{\al_1}I_N^1(p)$, $p^{\al_1}J_N^1(p)$, $p^{\al_1}I_N^2(p)$ and $p^{\al_1}J_N^2(p)$ as
\begin{align}
p^{\al_1}I_N^1(p) & =\sum_{n=1}^N\f{\la_n a_{N,n}(\bm x_0)}{\la_n p^{-\al_1}+\sum_{j=1}^m q_j p^{\al_j-\al_1}}=\f{1+o(1)}{q_1}\sum_{n=1}^N\la_n a_{N,n}(\bm x_0),\label{eq-est-i1}\\
p^{\al_1}J_N^1(p) & =\sum_{n=1}^N\f{\la_n b_{N,n}(\bm x_0)}{\la_n p^{-\al_1}+\sum_{j=1}^{m'}r_j p^{\be_j-\al_1}}=\f{p^{\al_1-\be_1}(1+o(1))}{r_1}\sum_{n=1}^N\la_n b_{N,n}(\bm x_0)=O(p^{\al_1-\be_1}),\label{eq-est-j1}\\
p^{\al_1}I_N^2(p) & =\sum_{j=1}^m q_j p^{\al_j-\al_1}\sum_{n=1}^N\left\{\f{D_n a_{N,n}(\bm x_0)}{(\la_n p^{-\al_1}+\sum_{j=1}^m q_j p^{\al_j-\al_1})^2}+O(p^{-\al_1})\right\}\nonumber\\
& =\f{1+o(1)}{q_1}\sum_{n=1}^N D_n a_{N,n}(\bm x_0)+O(p^{-\al_1}),\label{eq-est-i2}\\
p^{\al_1}J_N^2(p) & =O(p^{\al_1+\be_1})\sum_{n=1}^N O(p^{-2\be_1})\sum_{k=1}^{d_n-1}(-1)^{k+1}(D_n^k b_{N,n})(\bm x_0)=O(p^{\al_1-\be_1})\label{eq-est-j2}
\end{align}
for large $p>0$. Substituting \eqref{eq-est-i1}--\eqref{eq-est-j2} and \eqref{eq-est-i3}--\eqref{eq-est-j3} into \eqref{eq-est1}, we obtain
\begin{align*}
O(p^{-\nu+\al_1}) & =p^{\al_1}\sum_{i=1}^3(I_N^i(p)-J_N^i(p))\\
& =\f{1+o(1)}{q_1}\left\{\sum_{n=1}^N\la_n a_{N,n}(\bm x_0)+\sum_{n=1}^N D_n a_{N,n}(\bm x_0)\right\}+O(p^{\al_1-\be_1})+O(p^{-\al_1})\\
& \quad\,+O(1)\|a-a_N\|_{\cD(L^\ga)}+O(p^{\al_1-\be_1})\|b-b_N\|_{\cD(L^\ga)}\\
& =\f{1+o(1)}{q_1}L a_N(\bm x_0)+O(1)\|a-a_N\|_{\cD(L^\ga)}+o(1)\quad
\mbox{for large }p>0,
\end{align*}
where we used \eqref{eq-lan} and the uniform boundedness of $\|b-b_N\|_{\cD(L^\ga)}$ with respect to $N$. By the assumption $\nu>\min\{\al_1,\be_1\}=\al_1$, the above equation becomes
\begin{equation}\label{eq-est2}
\f{1+o(1)}{q_1}L a_N(\bm x_0)+O(1)\|a-a_N\|_{\cD(L^\ga)}=o(1)\quad\mbox{for 
large }p>0.
\end{equation}
Then we pass $N\to\infty$ in \eqref{eq-est2} and employ \eqref{eq-cov1}--\eqref{eq-cov2} to deduce
\[
\f{1+o(1)}{q_1}L a(\bm x_0)=o(1)\quad\mbox{for large }p>0.
\]
Consequently, passing $p\to\infty$ yields $L a(\bm x_0)=0$, which contradicts with the assumption $L a(\bm x_0)\ne0$.

In a completely parallel manner, assuming $\al_1>\be_1$ results in a contradiction with the assumption $L b(\bm x_0)\ne0$, indicating the only possibility $\al_1=\be_1$.

Now it suffices to verify $q_1/r_1=L a(\bm x_0)/L b(\bm x_0)$. Similarly to the treatment for \eqref{eq-est-i1} and \eqref{eq-est-i2}, we substitute $\al_1=\be_1$ into $p^{\al_1}J_N^1(p)$ and $p^{\al_1}J_N^2(p)$ to see that now \eqref{eq-est-j1} and \eqref{eq-est-j2} become
\[
p^{\al_1}J_N^1(p)=\f{1+o(1)}{r_1}\sum_{n=1}^N\la_n b_{N,n}(\bm x_0),\quad p^{\al_1}J_N^2(p)=\f{1+o(1)}{r_1}\sum_{n=1}^N D_n b_{N,n}(\bm x_0)+O(p^{-\al_1}),
\]
respectively. Substituting the above equalities, \eqref{eq-est-i1}, \eqref{eq-est-i2}, \eqref{eq-est-i3}, \eqref{eq-est-j3} into \eqref{eq-est1} and repeating the same calculation as before, we arrive at
\[
(1+o(1))\left(\f{L a_N(\bm x_0)}{q_1}-\f{L b_N(\bm x_0)}{r_1}\right)+O(1)\left(\|a-a_N\|_{\cD(L^\ga)}+\|b-b_N\|_{\cD(L^\ga)}\right)=o(1)
\]
for large $p>0$. Again passing first $N\to\infty$ and then $p\to\infty$, we conclude
\[
\f{L a(\bm x_0)}{q_1}-\f{L b(\bm x_0)}{r_1}=0
\]
or equivalently $q_1/r_1=L a(\bm x_0)/L b(\bm x_0)$. This completes the proof of part (i).

{\bf Step 3 } From now on we proceed to the proof of part (ii). For later convenience, we set $\ka:=L a(\bm x_0)/L b(\bm x_0)$. Notice that owing to the further assumption $a,b\in\cD(L^{1+\ga})$, the sequences
\[
\sum_{n=1}^N\la_n^2a_{N,n}(\bm x_0),\quad\sum_{n=1}^N\la_n^2b_{N,n}(\bm x_0),\quad\sum_{n=1}^N\la_n(D_n a_{N,n})(\bm x_0),\quad\sum_{n=1}^N\la_n(D_n b_{N,n})(\bm x_0)
\]
are also uniformly bounded for all $N\in\BN$.

In this step, we shall show by induction that for $j=1,2,\ldots,\min\{m,m'\}$, there holds
\begin{equation}\label{eq-induct}
\al_j=\be_j,\quad \f{q_j}{r_j}=\ka.
\end{equation}
Since the assumption of $\nu$ in part (ii) is stronger than that in part (i), the case of $j=1$ was already shown in Step 2.

Now let us assume that for some $\ell=2,\ldots,\min\{m,m'\}$, \eqref{eq-induct} holds for $j=1,\ldots,\ell-1$. Then our aim is to show that \eqref{eq-induct} also holds for $j=\ell$. Notice that now the terms $J_N^1(p)$ and $J_N^2(p)$ 
are written as
\begin{equation}\label{eq-j12}
\begin{aligned}
J_N^1(p) & =\sum_{n=1}^N\f{\la_n b_{N,n}(\bm x_0)}{\la_n+\ka^{-1}\sum_{j=1}^{\ell-1}q_j p^{\al_j}+\sum_{j=\ell}^{m'}r_j p^{\be_j}},\\
J_N^2(p) & =\left(\ka^{-1}\sum_{j=1}^{\ell-1}q_j p^{\al_j}+\sum_{j=\ell}^{m'}r_j p^{\be_j}\right)\sum_{n=1}^N\sum_{k=1}^{d_n-1}\f{(-1)^{k+1}(D_n^k b_{N,n})(\bm x_0)}{(\la_n+\ka^{-1}\sum_{j=1}^{\ell-1}q_j p^{\al_j}+\sum_{j=\ell}^{m'}r_j p^{\be_j})^{k+1}}.
\end{aligned}
\end{equation}
Similarly to Step 2, we start with showing $\al_\ell=\be_\ell$ by contradiction.

Again let us first assume that $\al_\ell<\be_\ell$. Now our strategy is first multiplying both sides of \eqref{eq-est1} by $p^{2\al_1-\be_\ell}$, then passing $N\to\infty$ and finally passing $p\to\infty$ to deduce a contradiction.

We first calculate $p^{2\al_1-\be_\ell}(I_N^1(p)-J_N^1(p))$. To this end, we rewrite the denominator in $I_N^1(p)$ as
\begin{align}
\f1{\la_n+\sum_{j=1}^m q_j p^{\al_j}} & =\f1{\la_n+\sum_{j=1}^{\ell-1}q_j p^{\al_j}}\left(1+\f{\sum_{j=\ell}^m q_j p^{\al_j}}{\la_n+\sum_{j=1}^{\ell-1}q_j p^{\al_j}}\right)^{-1}\nonumber\\
& =\f1{\la_n+\sum_{j=1}^{\ell-1}q_j p^{\al_j}}-\f{\sum_{j=\ell}^m q_j p^{\al_j}}{(\la_n+\sum_{j=1}^{\ell-1}q_j p^{\al_j})^2}+O\left(\f{(\sum_{j=\ell}^m q_j p^{\al_j})^2}{(\la_n+\sum_{j=1}^{\ell-1}q_j p^{\al_j})^3}\right)\nonumber\\
& =\f{p^{-\al_1}}{\la_n p^{-\al_1}+\sum_{j=1}^{\ell-1}q_j p^{\al_j-\al_1}}-\f{p^{\al_\ell-2\al_1}(q_\ell+o(1))}{(q_1+o(1))^2}+O(p^{2\al_\ell-3\al_1})\quad\mbox{for large }p>0.\label{eq-i1-denomi1}
\end{align}
For the denominator of the first term on the right-hand side, we further expand
\begin{align*}
\f1{\la_n p^{-\al_1}+\sum_{j=1}^{\ell-1}q_j p^{\al_j-\al_1}} & =\f1{\sum_{j=1}^{\ell-1}q_j p^{\al_j-\al_1}}\left(1+\f{\la_n p^{-\al_1}}{\sum_{j=1}^{\ell-1}q_j p^{\al_j-\al_1}}\right)^{-1}\\
& =\f1{\sum_{j=1}^{\ell-1}q_j p^{\al_j-\al_1}}+O\left(\f{\la_n p^{-\al_1}}{(\sum_{j=1}^{\ell-1}q_j p^{\al_j-\al_1})^2}\right)\\
& =\f1{\sum_{j=1}^{\ell-1}q_j p^{\al_j-\al_1}}+\la_n O(p^{-\al_1})
\quad\mbox{for large }p>0.
\end{align*}
Plugging the above expansion into \eqref{eq-i1-denomi1}, we obtain
\begin{equation}\label{eq-i1-denomi2}
\f1{\la_n+\sum_{j=1}^m q_j p^{\al_j}}=\f{p^{-\al_1}}{\sum_{j=1}^{\ell-1}q_j p^{\al_j-\al_1}}-\f{p^{\al_\ell-2\al_1}(q_\ell+o(1))}{(q_1+o(1))^2}+\la_n O(p^{-2\al_1})+O(p^{2\al_\ell-3\al_1})
\end{equation}
for large $p>0$. Treating the denominator in $J_N^1(p)$ similarly and substituting them into $p^{2\al_1-\be_\ell}(I_N^1(p)-J_N^1(p))$, we deduce
\begin{align}
p^{2\al_1-\be_\ell}(I_N^1(p)-J_N^1(p)) & =p^{\al_1-\be_\ell}\sum_{n=1}^N\la_n\left(\f{a_{N,n}(\bm x_0)}{\sum_{j=1}^{\ell-1}q_j p^{\al_j-\al_1}}-\f{b_{N,n}(\bm x_0)}{\ka^{-1}\sum_{j=1}^{\ell-1}q_j p^{\al_j-\al_1}}\right)\nonumber\\
& \quad\,-p^{\al_\ell-\be_\ell}(q_\ell+o(1))\sum_{n=1}^N\f{\la_n a_{N,n}(\bm x_0)}{(q_1+o(1))^2}+(r_\ell+o(1))\sum_{n=1}^N\f{\la_n b_{N,n}(\bm x_0)}{(r_1+o(1))^2}\nonumber\\
& \quad\,+O(p^{-\be_\ell})\sum_{n=1}^N\la_n^2(a_{N,n}-b_{N,n})(\bm x_0)+O(p^{2\al_\ell-\al_1-\be_\ell})\sum_{n=1}^N\la_n a_{N,n}(\bm x_0)\nonumber\\
& \quad\,+O(p^{\be_\ell-\al_1})\sum_{n=1}^N\la_n b_{N,n}(\bm x_0)\nonumber\\
& =\f{p^{\al_1-\be_\ell}(1+o(1))}{q_1}\sum_{n=1}^N\la_n(a_{N,n}-\ka\,b_{N,n})(\bm x_0)+\f{r_\ell+o(1)}{r_1^2}\sum_{n=1}^N\la_n b_{N,n}(\bm x_0)\nonumber\\
& \quad\,+o(1)\quad\mbox{for large }p>0,          \label{eq-ij1}
\end{align}
where we utilized the assumption $\al_1<\be_1$ and the uniform boundedness of all the involved sequences with respect to $N$.

Next we calculate $p^{2\al_1-\be_\ell}(I_N^2(p)-J_N^2(p))$. We further decompose $I_N^2(p)$ as
\begin{align*}
I_N^2(p)  & =\sum_{j=1}^m q_j p^{\al_j}\sum_{n=1}^N\left\{\f{D_n a_{N,n}(\bm x_0)}{(\la_n+\sum_{j=1}^m q_j p^{\al_j})^2}+\sum_{k=2}^{d_n-1}\f{(-1)^{k+1}(D_n a_{N,n})(\bm x_0)}{(\la_n+\sum_{j=1}^m q_j p^{\al_j})^{k+1}}\right\}\\
& =\sum_{n=1}^N\left\{\sum_{j=1}^m q_j p^{\al_j}\f{D_n a_{N,n}(\bm x_0)}{(\la_n+\sum_{j=1}^m q_j p^{\al_j})^2}+O(p^{-2\al_1})\sum_{k=2}^{d_n-1}(-1)^{k+1}(D_n a_{N,n})(\bm x_0)\right\}\\
& =\sum_{n=1}^N\left\{\f{D_n a_{N,n}(\bm x_0)}{\la_n+\sum_{j=1}^m q_j p^{\al_j}}-\f{\la_n(D_n a_{N,n})(\bm x_0)}{(\la_n+\sum_{j=1}^m q_j p^{\al_j})^2}\right\}+O(p^{-2\al_1})\\
& =\sum_{n=1}^N\f{D_n a_{N,n}(\bm x_0)}{\la_n+\sum_{j=1}^m q_j p^{\al_j}}+O(p^{-2\al_1})\quad\mbox{for large }p>0.
\end{align*}
Plugging the expansion \eqref{eq-i1-denomi2} into the above equality and treating $J_N^2(p)$ in the same manner, we deduce
\begin{align}
p^{2\al_1-\be_\ell}(I_N^2(p)-J_N^2(p)) & =p^{\al_1-\be_\ell}\sum_{n=1}^N\left(\f{D_n a_{N,n}(\bm x_0)}{\sum_{j=1}^{\ell-1}q_j p^{\al_j-\al_1}}-\f{D_n b_{N,n}(\bm x_0)}{\ka^{-1}\sum_{j=1}^{\ell-1}q_j p^{\al_j-\al_1}}\right)\nonumber\\
& \quad\,-p^{\al_\ell-\be_\ell}(q_\ell+o(1))\sum_{n=1}^N\f{D_n a_{N,n}(\bm x_0)}{(q_1+o(1))^2}+(r_\ell+o(1))\sum_{n=1}^N\f{D_n b_{N,n}(\bm x_0)}{(r_1+o(1))^2}\nonumber\\
& \quad\,+O(p^{-\be_\ell})\sum_{n=1}^N\la_n D_n(a_{N,n}-b_{N,n})(\bm x_0)+O(p^{2\al_\ell-\al_1-\be_\ell})\sum_{n=1}^N D_n a_{N,n}(\bm x_0)\nonumber\\
& \quad\,+O(p^{\be_\ell-\al_1})\sum_{n=1}^N D_n b_{N,n}(\bm x_0)\nonumber\\
& =\f{p^{\al_1-\be_\ell}(1+o(1))}{q_1}\sum_{n=1}^N D_n(a_{N,n}-\ka\,b_{N,n})(\bm x_0)+\f{r_\ell+o(1)}{r_1^2}\sum_{n=1}^N\la_n b_{N,n}(\bm x_0)\nonumber\\
& \quad\,+o(1)\quad\mbox{for large }p>0.            \label{eq-ij2}
\end{align}
Collecting \eqref{eq-ij1}, \eqref{eq-ij2}, \eqref{eq-est-i3}, \eqref{eq-est-j3} and using \eqref{eq-lan} again, we obtain
\begin{align*}
O(p^{-\nu+2\al_1-\be_\ell}) & =p^{2\al_1-\be_\ell}\sum_{i=1}^3(I_N^i(p)-J_N^i(p))\\
& =\f{p^{\al_1-\be_\ell}(1+o(1))}{q_1}\sum_{n=1}^N(\la_n+D_n)(a_{N,n}-\ka\,b_{N,n})(\bm x_0)+\f{r_\ell+o(1)}{r_1^2}\sum_{n=1}^N(\la_n+D_n)b_{N,n}(\bm x_0)\\
& \quad\,+o(1)+O(p^{\al_1-\be_\ell})\left(\|a-a_N\|_{\cD(L^\ga)}+\|b-b_N\|_{\cD(L^\ga)}\right)\\
& =O(p^{\al_1-\be_\ell})\left(\sum_{n=1}^N L(a_{N,n}-\ka\,b_{N,n})(\bm x_0)+\|a-a_N\|_{\cD(L^\ga)}+\|b-b_N\|_{\cD(L^\ga)}\right)\\
& \quad\,+\f{r_\ell+o(1)}{r_1^2}\sum_{n=1}^N L b_{N,n}(\bm x_0)+o(1)\\
& =O(p^{\al_1-\be_\ell})\left(L(a_N-\ka\,b_N)(\bm x_0)+\|a-a_N\|_{\cD(L^\ga)}+\|b-b_N\|_{\cD(L^\ga)}\right)\\
& \quad\,+\f{r_\ell+o(1)}{r_1^2}L b_N(\bm x_0)+o(1)\quad\mbox{for large }p>0.
\end{align*}
By the assumption $\nu>2\min\{\al_1,\be_1\}=2\al_1$, we know $-\nu+2\al_1-\be_\ell<0$ and thus
\begin{equation}\label{eq-est3}
O(p^{\al_1-\be_\ell})\left(L(a_N-\ka\,b_N)(\bm x_0)+\|a-a_N\|_{\cD(L^\ga)}+\|b-b_N\|_{\cD(L^\ga)}\right)+\f{r_\ell+o(1)}{r_1^2}L b_N(\bm x_0)=o(1)
\end{equation}
for large $p>0$. By the definition of $\ka$, we pass $N\to\infty$ in \eqref{eq-est3} and employ \eqref{eq-cov1}--\eqref{eq-cov2} again to deduce
\[
\f{r_\ell+o(1)}{r_1^2}L b(\bm x_0)=o(1)\quad\mbox{for large }p>0.
\]
Consequently, passing $p\to\infty$ yields $L b(\bm x_0)=0$, which contradicts the assumption $L b(\bm x_0)\ne0$. In an identically parallel manner, we can exclude the possibility of $\al_\ell>\be_\ell$ and conclude $\al_\ell=\be_\ell$.

Now it suffices to verify $q_\ell/r_\ell=\ka$. Similarly to the argument at the end of Step 2, it follows immediately from $\al_\ell=\be_\ell$ that now \eqref{eq-ij1} and \eqref{eq-ij2} become
\begin{align*}
p^{2\al_1-\be_\ell}(I_N^1(p)-J_N^1(p)) & =\f{p^{\al_1-\be_\ell}(1+o(1))}{q_1}\sum_{n=1}^N\la_n(a_{N,n}-\ka\,b_{N,n})(\bm x_0)-\f{q_\ell+o(1)}{q_1^2}\sum_{n=1}^N\la_n a_{N,n}(\bm x_0)\nonumber\\
& \quad\,+\f{r_\ell+o(1)}{r_1^2}\sum_{n=1}^N\la_n b_{N,n}(\bm x_0)+o(1),\\
p^{2\al_1-\be_\ell}(I_N^2(p)-J_N^2(p)) & =\f{p^{\al_1-\be_\ell}(1+o(1))}{q_1}\sum_{n=1}^N D_n(a_{N,n}-\ka\,b_{N,n})(\bm x_0)-\f{q_\ell+o(1)}{q_1^2}\sum_{n=1}^N\la_n a_{N,n}(\bm x_0)\nonumber\\
& \quad\,+\f{r_\ell+o(1)}{r_1^2}\sum_{n=1}^N\la_n b_{N,n}(\bm x_0)+o(1).
\end{align*}
Repeating the same calculation as before, we arrive at
\begin{align*}
o(1) & =O(p^{\al_1-\al_\ell})\left(L(a_N-\ka\,b_N)(\bm x_0)+\|a-a_N\|_{\cD(L^\ga)}+\|b-b_N\|_{\cD(L^\ga)}\right)\\
& \quad\,-\f{q_\ell+o(1)}{q_1^2}L a_N(\bm x_0)+\f{r_\ell+o(1)}{r_1^2}L a_N(\bm x_0)\quad\mbox{for large } p>0.
\end{align*}
Again passing first $N\to\infty$ and then $p\to\infty$, we conclude
\[
-\f{q_\ell}{q_1^2}L a(\bm x_0)+\f{r_\ell}{r_1^2}L b(\bm x_0)=0
\]
or equivalently $q_\ell/r_\ell=\ka$. This completes the proof of \eqref{eq-induct} for $j=\ell$. By the inductive assumption, eventually \eqref{eq-induct} holds for all $j=1,2,\ldots,\min\{m,m'\}$.

{\bf Step 4 } At last, it remains to show $m=m'$ again by contradiction, that is, we assume $m<m'$ without loss of generality. As those in the previous steps, our strategy is again multiplying both sides of \eqref{eq-est1} by $p^{2\al_1-\be_{m+1}}$ to deduce a contradiction.

Notice that now $J_N^1(p)$ and $J_N^2(p)$ are represented by \eqref{eq-j12} with $\ell=m+1$. Treating the denominators in $I_N^1(p)$ and $J_N^1(p)$ similarly as that in \eqref{eq-i1-denomi2} yields
\begin{align*}
\f1{\la_n+\sum_{j=1}^m q_j p^{\al_j}} & =\f{p^{-\al_1}}{\sum_{j=1}^m q_j p^{\al_j-\al_1}}+\la_n O(p^{-2\al_1}),\\
\f1{\la_n+\sum_{j=1}^{m'}r_j p^{\be_j}} & =\f{\ka\,p^{-\al_1}}{\sum_{j=1}^m q_j p^{\al_j-\al_1}}-\f{p^{2\al_1-\be_{m+1}}(r_{m+1}+o(1))}{(r_1+o(1))^2}+\la_n O(p^{-2\al_1})+O(p^{2\be_{m+1}-3\al_1})
\end{align*}
for large $p>0$. Then
\begin{align}
p^{2\al_1-\be_{m+1}}(I_N^1(p)-J_N^1(p)) & =\f{p^{\al_1-\be_{m+1}}(1+o(1))}{q_1}\sum_{n=1}^N\la_n(a_{N,n}-\ka\,b_{N,n})(\bm x_0)\nonumber\\
& \quad\,+\f{r_{m+1}+o(1)}{r_1^2}\sum_{n=1}^N\la_n b_{N,n}(\bm x_0)+o(1)\quad\mbox{for large }p.                         \label{eq-ij1'}
\end{align}
Treating $I_N^2(p)$ and $J_N^2(p)$ similarly to 
that in Step 3 and expanding the involved denominators as above, we have
\begin{align}
p^{2\al_1-\be_{m+1}}(I_N^2(p)-J_N^2(p)) & =\f{p^{\al_1-\be_{m+1}}(1+o(1))}{q_1}\sum_{n=1}^N D_n(a_{N,n}-\ka\,b_{N,n})(\bm x_0)\nonumber\\
& \quad\,+\f{r_{m+1}+o(1)}{r_1^2}\sum_{n=1}^N D_n b_{N,n}(\bm x_0)+o(1)\quad\mbox{for large }p.\label{eq-ij2'}
\end{align}
Collecting \eqref{eq-ij1'}, \eqref{eq-ij2'}, \eqref{eq-est-i3}, \eqref{eq-est-j3} and repeating the same calculation used in Step 3, we reach \eqref{eq-est3} with $\ell=m+1$. Performing the same limiting process as before, we conclude again the contradiction $L b(\bm x_0)=0$. This completes the proof of $m=m'$ and thus finalizes the proof of part (ii).
\end{proof}

Now we turn to the proof of Theorem \ref{thm-inexact1}, which relies on the following key lemma concerning the temporal component $\rho(t)$ of the inhomogeneous terms in \eqref{eq-ibvp-u} and \eqref{eq-ibvp-v}.

\begin{lem}\label{lem-rho}
Let $\rho\in L^1(\BR_+)$ satisfy \eqref{eq-rho}. Then its Laplace transform $\wh\rho(p)$ satisfies $\wh\rho(p)\sim p^{-\mu-1}$ for large $p>0$.
\end{lem}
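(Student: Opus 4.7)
The plan is to prove this as a standard Abelian-type result: the behavior of $\rho$ near $t=0$ dictates the behavior of $\wh\rho(p)$ for large $p$. The goal is to show $\lim_{p\to\infty} p^{\mu+1}\wh\rho(p) = C'\Ga(\mu+1)$, where $C' = \lim_{t\to 0+} t^{-\mu}\rho(t) \ne 0$. Since $\Ga(\mu+1)\ne0$ for $\mu>-1$, this limit is a nonzero constant, matching the definition of $\sim$.

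First I would split the Laplace transform at some cutoff $\de>0$:
\[
\wh\rho(p) = \int_0^\de \e^{-p t}\rho(t)\,\rd t + \int_\de^\infty\e^{-p t}\rho(t)\,\rd t.
\]
The tail piece is harmless: since $\rho\in L^1(\BR_+)$ and $\e^{-pt}\le\e^{-p\de}$ on $[\de,\infty)$, the second integral is bounded by $\e^{-p\de}\|\rho\|_{L^1(\BR_+)}$, which is $o(p^{-\mu-1})$ as $p\to\infty$ for any fixed $\de$.

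For the head piece, I would exploit the assumption \eqref{eq-rho}: for any $\ve>0$, there is $\de=\de(\ve)>0$ such that $|\rho(t)-C't^\mu|\le\ve\,t^\mu$ for $0\le t\le\de$. Then
\[
\left|\int_0^\de \e^{-p t}\rho(t)\,\rd t - C'\int_0^\de\e^{-p t}t^\mu\,\rd t\right| \le \ve\int_0^\de\e^{-p t}t^\mu\,\rd t \le \ve\,p^{-\mu-1}\Ga(\mu+1).
\]
For the reference integral, the substitution $s=p t$ gives
\[
\int_0^\de\e^{-p t}t^\mu\,\rd t = p^{-\mu-1}\int_0^{p\de}\e^{-s}s^\mu\,\rd s,
\]
which tends to $p^{-\mu-1}\Ga(\mu+1)$ as $p\to\infty$ (with error $o(p^{-\mu-1})$, in fact exponentially small in $p\de$).

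Combining the three estimates, for any $\ve>0$ one obtains
\[
\limsup_{p\to\infty}\left|p^{\mu+1}\wh\rho(p) - C'\Ga(\mu+1)\right| \le \ve\,\Ga(\mu+1),
\]
and since $\ve>0$ is arbitrary, $\lim_{p\to\infty}p^{\mu+1}\wh\rho(p)=C'\Ga(\mu+1)\ne0$, which is exactly $\wh\rho(p)\sim p^{-\mu-1}$. No real obstacle arises; the only point requiring care is that the rescaled integral $\int_0^{p\de}\e^{-s}s^\mu\,\rd s$ approaches $\Ga(\mu+1)$ (using $\mu>-1$ for integrability at $0$), and that $C'\Ga(\mu+1)$ is a genuine nonzero constant so the conclusion is stated in the form $\sim p^{-\mu-1}$ in the sense defined in Section \ref{sec-prelim}.
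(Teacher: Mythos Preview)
Your proof is correct and follows essentially the same route as the paper: split $\wh\rho(p)$ at a small cutoff, bound the tail by $\e^{-p\de}\|\rho\|_{L^1(\BR_+)}=o(p^{-\mu-1})$, and handle the head via the substitution $s=pt$ so that $\int_0^\de\e^{-pt}t^\mu\,\rd t=p^{-\mu-1}\int_0^{p\de}\e^{-s}s^\mu\,\rd s\to p^{-\mu-1}\Ga(\mu+1)$. Your $\ve$--$\de$ treatment of the head is in fact slightly more explicit than the paper's, which simply records two-sided bounds $C_1t^\mu\le\rho(t)\le C_2t^\mu$ on $(0,\ve)$ and asserts $I_1(p)\sim p^{-\mu-1}$; your version cleanly pins down the limit $C'\Ga(\mu+1)$ in accordance with the $\sim$ convention of Section~\ref{sec-prelim}.
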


\begin{proof}
According to \eqref{eq-rho}, there exist a small constant $\ve>0$ and 
constants $C_1,C_2\in\BR$ satisfying $C_1<C_2$ and $C_1C_2>0$ such that
\begin{equation}\label{eq-rho1}
C_1t^\mu\le\rho(t)\le C_2t^\mu\quad\mbox{a.e. }t\in(0,\ve).
\end{equation}
By the definition of the Laplace transform, we decompose $\wh\rho(p)$ as
\[
\wh\rho(p)=\left(\int_0^\ve+\int_\ve^\infty\right)\e^{-p t}\rho(t)\,\rd t=:I_1(p)+I_2(p),
\]
where
\[
|I_2(p)|\le\int_\ve^\infty\e^{-p t}|\rho(t)|\,\rd t\le\e^{-\ve p}\|\rho\|_{L^1(\BR_+)}=o(p^{-\mu-1})\quad\mbox{for large }p>0.
\]
For $I_1(p)$, it follows from \eqref{eq-rho1} and
\[
\int_0^\ve\e^{-p t}t^\mu\,\rd t=\f1{p^{\mu+1}}\int_0^{\ve p}\e^{-s}s^\mu\,\rd s\sim p^{-\mu-1}\quad\mbox{for large }p>0
\]
that $I_1(p)\sim p^{-\mu-1}$ for large $p>0$, which completes the proof.
\end{proof}

\begin{proof}[Proof of Theorem $\ref{thm-inexact1}$]
Let $a=b\equiv0$ in $\Om$ in \eqref{eq-ibvp-u} and \eqref{eq-ibvp-v}. Similarly to Step 1 in the proof of Theorem \ref{thm-inexact}, it is not difficult to conclude that the difference between $\wh u(\bm x_0,p)$ and $\wh v(\bm x_0,p)$ still satisfies \eqref{eq-est0} for large $p>0$.

Next, we take the Laplace transform in \eqref{eq-ibvp-u} and \eqref{eq-ibvp-v} and substitute $\bm x=\bm x_0$ to deduce
\[
\wh u(\bm x_0,p)-\wh v(\bm x_0,p)=\wh\rho(p)\left(w(\bm x_0,p)-y(\bm x_0,p)\right)\quad\mbox{for large }p>0,
\]
where
\begin{equation}\label{eq-exp-wy}
w(\,\cdot\,,p):=\left(L+\sum_{j=1}^m q_j p^{\al_j}\right)^{-1}f,\quad y(\,\cdot\,,p):=\left(L+\sum_{j=1}^{m'}r_j p^{\be_j}\right)^{-1}g.
\end{equation}
Then we employ \eqref{eq-est0} and Lemma \ref{lem-rho} to obtain
\begin{equation}\label{eq-est-wy}
w(\bm x_0,p)-y(\bm x_0,p)=O(p^{\mu-\nu})\quad\mbox{for large }p>0.
\end{equation}
Arguing similarly as before, we utilize Lemma \ref{lem-total} to pick two sequences $f_N=\sum_{n=1}^N f_{N,n}$ and $g_N=\sum_{n=1}^N g_{N,n}$ ($N\in\BN$) such that
\[
f_{N,n},g_{N,n}\in P_n L^2(\Om),\quad\lim_{N\to\infty}\|L^\ga(f-f_N)\|_{L^2(\Om)}=\lim_{N\to\infty}\|L^\ga(g-g_N)\|_{L^2(\Om)}=0.
\]
Then for any fixed $N\in\BN$, we decompose $f=f_N+(f-f_N)$ and employ the expansion \eqref{eq-Laurant} to rewrite \eqref{eq-exp-wy} as
\begin{align*}
w(\,\cdot\,,p) & =\left(L+\sum_{j=1}^m q_j p^{\al_j}\right)^{-1}\left\{\sum_{n=1}^N f_{N,n}+(f-f_N)\right\}\\
& =\sum_{n=1}^N\left(L+\sum_{j=1}^m q_j p^{\al_j}\right)^{-1}P_n f_{N,n}+\left(L+\sum_{j=1}^m q_j p^{\al_j}\right)^{-1}(f-f_N)\\
& =\sum_{n=1}^N\f{f_{N,n}}{\la_n+\sum_{j=1}^m q_j p^{\al_j}}+\sum_{n=1}^N\sum_{k=1}^{d_n-1}\f{(-1)^k D_n^k f_{N,n}}{(\la_n+\sum_{j=1}^m q_j p^{\al_j})^{k+1}}+\left(L+\sum_{j=1}^m q_j p^{\al_j}\right)^{-1}(f-f_N)
\end{align*}
and similarly
\[
y(\,\cdot\,,p)=\sum_{n=1}^N\f{g_{N,n}}{\la_n+\sum_{j=1}^{m'}r_j p^{\be_j}}+\sum_{n=1}^N\sum_{k=1}^{d_n-1}\f{(-1)^k D_n^k g_{N,n}}{(\la_n+\sum_{j=1}^{m'}r_j p^{\be_j})^{k+1}}+\left(L+\sum_{j=1}^{m'}r_j p^{\be_j}\right)^{-1}(g-g_N).
\]
Then taking $\bm x=\bm x_0$ in the above expressions and substituting them into \eqref{eq-est-wy} yield
\[
\sum_{i=1}^3(I_N^i(p)-J_N^i(p))=O(p^{\mu-\nu})\quad\mbox{for large }p>0,
\]
where
\begin{alignat*}{2}
I_N^1(p) & :=\sum_{n=1}^N\f{f_{N,n}(\bm x_0)}{\la_n+\sum_{j=1}^m q_j p^{\al_j}}, & \quad J_N^1(p) & :=\sum_{n=1}^N\f{g_{N,n}(\bm x_0)}{\la_n+\sum_{j=1}^{m'}r_j p^{\be_j}},\\
I_N^2(p) & :=\sum_{j=1}^m q_j p^{\al_j}\sum_{n=1}^N\sum_{k=1}^{d_n-1}\f{(-1)^k(D_n^k f_{N,n})(\bm x_0)}{(\la_n+\sum_{j=1}^m q_j p^{\al_j})^{k+1}}, & \quad J_N^2(p) & :=\sum_{j=1}^{m'}r_j p^{\be_j}\sum_{n=1}^N\sum_{k=1}^{d_n-1}\f{(-1)^k(D_n^k g_{N,n})(\bm x_0)}{(\la_n+\sum_{j=1}^{m'}r_j p^{\be_j})^{k+1}},\\
I_N^3(p) & :=\left(\left(L+\sum_{j=1}^m q_j p^{\al_j}\right)^{-1}(f-f_N)\right)(\bm x_0), & \quad J_N^3(p) & :=\left(\left(L+\sum_{j=1}^{m'}r_j p^{\be_j}\right)^{-1}(g-g_N)\right)(\bm x_0).
\end{alignat*}
The remaining part of the proof turns out to be essentially the same as that of Theorem \ref{thm-inexact} and we refrain from repeating the details.
\end{proof}

\begin{proof}[Proof of Theorem $\ref{thm-unique}$]
We only provide the proof for part (1) because that of part (2) is mostly parallel. Then throughout this proof, we set $f=g\equiv0$ in $\Om$ in \eqref{eq-ibvp-u} and \eqref{eq-ibvp-v}. Notice that thanks to the symmetry of $L$, we have \eqref{eq-symm} and
\begin{equation}\label{eq-symm1}
h=\sum_{n=1}^\infty P_n h,\ \forall\,h\in L^2(\Om),\quad(L-z)^{-1}P_n=\f{P_n}{\la-z},\ z\not\in\si(L).
\end{equation}

First we assume \eqref{eq-exact} and show the ``only if'' part. According to Theorem \ref{thm-inexact}, we have already reached \eqref{eq-unique2}. Together with \eqref{eq-symm1}, we can rewrite \eqref{eq-exp-u} and \eqref{eq-exp-v} as
\begin{equation}\label{eq-iff}
\begin{aligned}
\wh u(\,\cdot\,,p) & =\f1p\sum_{j=1}^m q_j p^{\al_j}\sum_{n=1}^\infty\f{P_n a}{\la_n+\sum_{j=1}^m q_j p^{\al_j}},\\
\wh v(\,\cdot\,,p) & =\f1p\sum_{j=1}^m q_j p^{\al_j}\sum_{n=1}^\infty\f{P_n b}{\ka\la_n+\sum_{j=1}^m q_j p^{\al_j}}
\end{aligned}\quad\mbox{for large }p>0.
\end{equation}
On the other hand, owing to the time-analyticity of the solutions to \eqref{eq-ibvp-u} and \eqref{eq-ibvp-v}, we see that \eqref{eq-exact} implies $u(\bm x_0,\,\cdot\,)=v(\bm x_0,\,\cdot\,)$ in $\BR_+$. Therefore, taking the Laplace transform results in $\wh u(\bm x_0,p)=\wh v(\bm x_0,p)$ for large $p>0$ and hence
\[
z\sum_{n=1}^\infty\f{P_n a(\bm x_0)}{\la_n+z}=z\sum_{n=1}^\infty\f{P_n b(\bm x_0)}{\ka\la_n+z}\quad\mbox{for large }z,\quad z:=\sum_{j=1}^m q_j p^{\al_j}.
\]
Then we can analytically extend both sides of the above equality in $z\in\BC$ to obtain
\begin{equation}\label{eq-cauchy}
\sum_{n=1}^\infty\f{P_n a(\bm x_0)}{\la_n+z}=\sum_{n=1}^\infty\f{P_n b(\bm x_0)}{\ka\la_n+z},\quad\sum_{n=1}^\infty\f{P_n L a(\bm x_0)}{\la_n+z}=\sum_{n=1}^\infty\f{\ka\,P_n L b(\bm x_0)}{\ka\la_n+z},\quad z\in\BC\setminus\La,
\end{equation}
where
\[
\La:=(-\si(L))\cup(-\ka\,\si(L)),\quad\eta\,\si(L):=\{\eta\la_n\mid\la_n\in\si(L)\},\quad\eta\in\BR.
\]
Here we used \eqref{eq-a=b} and the fact that $\la_n L=L P_n$.

Now we concentrate on $\ka$ and consider the cases of $\ka=1$ and $\ka\ne1$ separately.

{\bf Case 1 } If $\ka=1$, then $\La=-\si(L)$ and for each $n\in\BN$, we can take a sufficiently small circle $\ga_n$ enclosing $-\la_n$ and excluding $-\si(L)\setminus\{-\la_n\}$. Integrating \eqref{eq-cauchy} on $\ga_n$ with respect to $z$, we can employ Cauchy's integral theorem to conclude \eqref{eq-pab1}. In view of Remark \ref{rem-ka1}, this completes the proof of \eqref{eq-pab} for $\ka=1$.

{\bf Case 2 } Now let us assume $\ka\ne1$. To obtain \eqref{eq-iff1}, it suffices to show $\ka\in\Si$ by contradiction. If $\ka\not\in\Si$, then it follows from the definition of $\Si$ that $\si(L)\cap\ka\,\si(L)=\emptyset$. Thus for each $n\in\BN$, we can take the same circle $\ga_n$ as that in Case 1 enclosing $-\la_n$ and excluding $\La\setminus\{-\la_n\}$. Then integrating \eqref{eq-cauchy} on $\ga_n$ again yields $P_n L a(\bm x_0)=0$ ($\forall\,n\in\BN$) and hence
\[
L a(\bm x_0)=\sum_{n=1}^\infty P_n L a(\bm x_0)=0,
\]
which contradicts the assumption $L a(\bm x_0)\ne0$. This verifies $\ka\in\Si$ and also \eqref{eq-iff1}.

Now we turn to demonstrating \eqref{eq-pab} for $\ka\ne1$. Again by the definition of $\Si$, we know that $\si(L)\cap\ka\,\si(L)\ne\emptyset$ and the index sets $M_\ka,M_\ka'$ defined by \eqref{eq-def-mm} are obviously nonempty. Meanwhile, by the strict monotonicity ordering of $\si(L)$, it is readily seen that for each $n\in M_\ka$, there exists a unique $n'\in M_\ka'$ such that $\la_n=\ka\,\la_{n'}$ and vise versa. Then the bijection $\te_\ka:M_\ka\longrightarrow M_\ka'$ can be defined by $\te_\ka(n)=n'$.

Next, for each $n\not\in M_\ka$, we have $\la_n\not\in\ka\,\si(L)$. Then we can take the same circle $\ga_n$ as before and integrating \eqref{eq-cauchy} on $\ga_n$ to conclude
\[
P_n a(\bm x_0)=P_n L a(\bm x_0)=0,\quad\forall\,n\not\in M_\ka.
\]
Similarly, we have
\[
P_n b(\bm x_0)=P_n L b(\bm x_0)=0,\quad\forall\,n\not\in M_\ka'.
\]
Finally, for each $n\in M_\ka$, we can take a circle $\ga_n$ enclosing $\la_n=\ka\la_{\te_\ka(n)}$ and excluding $\La\setminus\{-\la_n\}$. Then again integrating \eqref{eq-cauchy} on $\ga_n$ gives
\[
P_n a(\bm x_0)=P_{\te_ka(n)} b(\bm x_0),\quad P_n L a(\bm x_0)=\ka\,P_{\te_\ka(n)} L b(\bm x_0),\quad\forall\,n\in M_\ka.
\]
Collecting the above three cases completes the proof of \eqref{eq-pab} for $\ka\ne1$.

Now it remains to show the ``if'' part. Taking $\bm x=\bm x_0$ in \eqref{eq-iff} and employing conditions \eqref{eq-def-mm}--\eqref{eq-pab}, we obtain
\begin{align*}
\wh u(\bm x_0,p) & =\f z p\sum_{n=1}^\infty\f{P_n a(\bm x_0)}{\la_n+z}=\f z p\sum_{n\in M_\ka}\f{P_n a(\bm x_0)}{\la_n+z}=\f z p\sum_{n\in M_\ka}\f{P_{\te_\ka(n)}b(\bm x_0)}{\la_n+z}\\
& =\f z p\sum_{n\in M_\ka'}\f{P_n b(\bm x_0)}{\ka\la_n+z}=\f z p\sum_{n=1}^\infty\f{P_n b(\bm x_0)}{\ka\la_n+z}=\wh v(\bm x_0,p)\quad\mbox{for large }p,
\end{align*}
where we abbreviated $z:=\sum_{j=1}^m q_j p^{\al_j}$. Then the coincidence of $u$ and $v$ at $\{\bm x_0\}\times\BR_+$ follows immediately from the uniqueness of the Laplace transform.
\end{proof}

\begin{proof}[Proof of Corollary $\ref{coro-unique}$]
Similarly to the proof of Theorem \ref{thm-unique}, we only deal with part (1) and omit the proof of part (2). Then it suffices to show the equivalence of \eqref{eq-pab} and \eqref{eq-rel-ab}.

By the assumption \eqref{eq-omega} on the choice of $\bm x_0$, there holds $\vp_n(\bm x_0)\ne0$ for all $n\in\BN$. On the other hand, since all eigenvalues of $L$ are assumed to be simple, it reveals that
\[
P_n h=(h,\vp_n)\vp_n,\quad P_n L h=\la_n(h,\vp_n)\vp_n,\quad\forall\,h\in\cD(L).
\]
Therefore, noticing that $\la_n=\ka\la_{\te_\ka(n)}$ for $n\in M_\ka$, we immediately see that \eqref{eq-pab} is equivalent to
\[
\begin{cases}
(a,\vp_n)\vp_n(\bm x_0)=(b,\vp_{\te_\ka(n)})\vp_{\te_\ka(n)}(\bm x_0), & n\in M_\ka,\\
(a,\vp_n)=0, & n\not\in M_\ka,\\
(b,\vp_n)=0, & n\not\in M_\ka'.
\end{cases}
\]
Substituting the above relation into the Fourier expansions of $a,b$, we immediately obtain \eqref{eq-rel-ab}. The opposite side can also be verified similarly.

Especially if $\ka=1$, it turns out that \eqref{eq-pab} simply equals
\[
(a,\vp_n)=(b,\vp_n),\quad\forall\,n\in\BN
\]
or equivalently $a=b$ in $\Om$. The proof of Corollary \ref{coro-unique} is completed.
\end{proof}

\section{Concluding Remarks}\label{sec-rem}

As information of solutions for our inverse problem,
we assume only \eqref{eq-inexact}, 
which means the coincidence of principal terms of 
asymptotic expansions of solutions as $t\downarrow0$.
On the other hand, after taking Laplace transform of the solutions with respect to $t$, we only take advantage of the asymptotic behavior of $\wh u(\bm x_0,p)-\wh v(\bm x_0,p)$ for large $p>0$. This reflects the dominating effect of 
fractional orders and corresponding parameters to the solution. Simultaneously, it also suggests that the asymptotic behavior of solutions possesses abundant information about the equations.

The starting point of the proofs relies on the Laplace transforms of solutions, whose existence was guaranteed by the key estimate \eqref{eq-est-u} in Lemma \ref{lem-fp}. Nevertheless, especially for $\si(L)\subset\BR_+$, the estimate \eqref{eq-est-u} is not at all sharp for large $t$. The sharp decay rate with a non-symmetric $L$ keeps open even in the single-term case of \eqref{eq-ibvp-u}, which deserves further investigation.

By reviewing the proof, we immediately see that our arguments also work for multi-term time-fractional wave equations, that is, \eqref{eq-ibvp-u} and \eqref{eq-ibvp-v} with
\begin{gather*}
2>\al_1>\al_2>\cdots>\al_m>0,\quad2>\be_1>\be_2>\cdots>\be_{m'}>0,\\
\pa_t u=0\quad\mbox{if }\al_1>1,\quad\pa_t v=0\quad\mbox{if }\be_1>1\quad\mbox{in }\Om\times\{0\}.
\end{gather*}
However, since the forward theory for multi-term time-fractional wave equations (especially with a non-symmetric elliptic part) is not well established for the moment, we postpone this generalization as a future work. Meanwhile, other possible future topics include the case of $\bm x$-dependent parameters $q_j(\bm x),r_j(\bm x)$.

Recently researches of wave equations with time-fractional damping terms,
are developed. Such fractional systems and inverse problems 
are important also from physical points of view, and we can refer to 
Kaltenbacher and Rundell \cite{KR22}. We can expect that 
our arguments may be available, but this should be a future work.

Our main purpose is the determination of scalar parameters $m, \alpha_j,
q_j$ in a multi-term time-fractional diffusion equation by some minimal 
information of solutions, as which we adopt the asymptotic behavior 
near $t=0$.  In general, it is difficult to prove the uniqueness in 
determining spatially varying initial data or source terms by such sparse data.

We can discuss weakened information of \eqref{eq-inexact}:
\begin{gather}
\mbox{There exist a sequence $\{t_m\}_{m=1}^\infty\subset\BR_+$ 
and a constant $C>0$ such that}\nonumber\\
\lim_{m\to\infty}t_m=0\quad\mbox{and}\quad
|u(\bm x_0,t_m)-v(\bm x_0,t_m)|\le C\,t^{\wt\nu}_m, \quad m\in \BN,\label{eq-5.1}
\end{gather}
where $\wt\nu>\min\{\al_1,\be_1\}$.
From a practical viewpoint, $\{t_m\}$ is a set of 
sampling times at a spatial monitoring point $\bm x_0\in\Om$.
We can expect that \eqref{eq-5.1} is equivalent to \eqref{eq-inexact}, so that 
replacing \eqref{eq-inexact} by the weakened condition \eqref{eq-5.1}, we may obtain the same conlclusion as in Theorem \ref{thm-inexact}.

The proof in the general case is postponed to a future work, and here we 
sketch in the case $m=1$ with smooth $a,b\in\cD(L^\ga)$ where $\ga> 0$ is sufficiently large. We assume that $(L a)(\bm x_0)\ne0$, $(L b)(\bm x_0)\ne0$
and we write $\al=\al_1$, $\be=\be_1$ for simplicity. Then 
\eqref{eq-5.1} holds with some $\wt\nu>\min\{\al,\be\}$ if
and only if \eqref{eq-inexact} holds with some $\nu>\min\{\al,\be\}$.

\begin{proof}
It is trivial that \eqref{eq-inexact} implies \eqref{eq-5.1}.
Assuming \eqref{eq-5.1}, we see that 
\[
\begin{aligned}
u(\bm x_0,t) & =\sumn E_{\al,1}\left(-\f{\la_n}{q_1}t^\al\right)(P_n a)(\bm x_0),\\
v(\bm x_0,t) & =\sumn E_{\be,1}\left(-\f{\la_n}{r_1}t^\al\right)(P_n b)(\bm x_0),
\end{aligned}\quad t>0.
\]
For $a,b\in\cD(L^\ga)$ with large $\ga>0$, we can calculate
\begin{align*}
u(\bm x_0,t) & =\sumn E_{\al,1}\left(-\f{\la_n}{q_1}t^\al\right)(P_n a)(\bm x_0)\\
& =\sumn\left\{\left(1-\f{\la_n t^\al}{q_1\Ga(\al+1)}\right)P_n a(\bm x_0)
+\sum_{k=2}^\infty\f{(-\f{\la_n}{q_1}t^\al)^k}{\Ga(\al k+1)}P_n a(\bm x_0) \right\}\\
& =\sumn(P_n a)(\bm x_0)-\f{t^\al}{q_1\Ga(\al+1)}\sumn\la_n(P_n a)(\bm x_0)\\
& \quad\,+\sumn\sum_{j=0}^\infty\f1{\Ga(\al j+2\al+1)}
\left(-\f{\la_n}{q_1}t^\al\right)^2\left(-\f{\la_n}{q_1}t^\al\right)^j(P_n a)(\bm x_0)\\
& =a(\bm x_0)-\f{t^\al}{q_1\Ga(\al+1)}(L a)(\bm x_0)+\f{t^{2\al}}{q_1^2}
\sumn E_{\al,2\al+1}\left(-\f{\la_n}{q_1}t^\al\right)P_n(L^2a)(\bm x_0).
\end{align*}
Since $a$ is sufficiently smooth, we can verify by the estimate of 
$E_{\al,2\al+1}$ (e.g., \cite[Theorem 1.6 (p.35)]{P99}) that
$$
\sumn\left|E_{\al,2\al+1}\left(-\f{\la_n}{q_1}t^\al\right)\right||P_n(L^2a)(\bm x_0)|
\le\sumn\f C{1+\f{\la_n}{q_1}t^\al}|P_n(L^2a)(\bm x_0)|<\infty.
$$
Hence, we obtain
\begin{equation}\label{eq-5.2}
u(\bm x_0,t)=a(\bm x_0)-\f{t^\al}{q_1\Ga(\al+1)}(L a)(\bm x_0)+O(t^{2\al})
\quad\mbox{as }t\downarrow0.
\end{equation}
Similarly, we can prove
\begin{equation}\label{eq-5.3}
v(\bm x_0,t)=b(\bm x_0)-\f{t^\be}{r_1\Ga(\be+1)}(L b)(\bm x_0)+O(t^{2\be})
\quad\mbox{as }t\downarrow0.
\end{equation}
Therefore, \eqref{eq-5.1} yields
\[
\left|a(\bm x_0)-b(\bm x_0)-\f{(L a)(\bm x_0)}{q_1\Ga(\al+1)}t_m^\al
+\f{(L b)(\bm x_0)}{r_1\Ga(\be+1)}t_m^\be+O(t_m^{2\al})+O(t_m^{2\be})\right|
\le C\,t_m^{\wt\nu},\quad m\in\BN.
\]
Here $\wt\nu>\min\{\al,\be\}$. Letting $m\to\infty$, we see \eqref{eq-a=b}.
Assuming $\al\le\be$ without loss of generality, we see $\wt\nu>\al$.
Dividing the above inequality by $t_m^\al$, we have
\begin{equation}\label{eq-5.5}
\left|-\f{(L a)(\bm x_0)}{q_1\Ga(\al+1)}
+\f{(L b)(\bm x_0)}{r_1\Ga(\be+1)}t_m^{\be-\al}
+O(t_m^\al)+O(t_m^{2\be-\al})\right|
\le C\,t_m^{\wt\nu-\al},\quad m\in\BN.
\end{equation}
Letting $m\to\infty$, since $O(t_m^{2\be-\al})=o(1)$ and $t_m^{\wt\nu-\al}=o(1)$ as 
$m\to\infty$, we reach 
$$
\f{(L a)(\bm x_0)}{q_1\Ga(\al+1)}
=\lim_{m\to\infty}t_m^{\be-\al}\left(\f{(L b)(\bm x_0)}{r_1\Ga(\be+1)}\right).
$$
By the assumption $(L a)(\bm x_0)\ne0$ and $(L b)(\bm x_0)\ne0$, we obtain
$$
\lim_{m\to\infty}t_m^{\be-\al}\left(\f{(L b)(\bm x_0)}{r_1\Ga(\be+1)}\right)\ne0,
$$
that is, $\lim_{m\to\infty}t_m^{\be-\al}\ne0$, which implies
$\be=\al$. Letting $m\to\infty$ in \eqref{eq-5.5} with $\al=\be$, we see that 
$$
\f{(L b)(\bm x_0)}{r_1\Ga(\be+1)}=\f{(L a)(\bm x_0)}{q_1\Ga(\al+1)}.
$$
Therefore, by \eqref{eq-5.2}--\eqref{eq-5.5}, we obtain
$$
|u(\bm x_0,t)-v(\bm x_0,t)|= O(t^{2\al})\quad\mbox{as }t\downarrow0.
$$
Hence, \eqref{eq-inexact} is satisfied with $\nu:=2\al>\min\{\al,\be\}$.
Thus \eqref{eq-inexact} and \eqref{eq-5.1} provide equivalent information.
\end{proof}

Although the single point observation is sufficient for the uniqueness of 
Problem \ref{prob-pip} theoretically, it seems meaningful to develop numerical methods with multiple observation points in order to improve the accuracy for 
the reconstruction. 
If the corresponding rank condition (see \cite{S75}) is also satisfied 
in this case, it is even possible to uniquely identify the initial value $a$ or the spatial component $f$ of the source term in \eqref{eq-ibvp-u}, whose numerical reconstruction may also be interesting.

Finally, for Problem \ref{prob-pip}, we can adopt information of the asymptotic data
as $t\to\infty$, but we postpone details to another future work.
 

\section*{Acknowledgments}

Y.\! Liu is supported by Grant-in-Aid for Early-Career Scientists 20K14355 and 22K13954, Japan Society for the Promotion of Science (JSPS).
M.\! Yamamoto is supported by Grant-in-Aid for Scientific Research (A) 20H00117 and Grant-in-Aid for Challenging Research (Pioneering) 21K18142, JSPS.

\bibliographystyle{unsrt}

\end{document}